\titleformat{\subsection}{\it}{\thesubsection.\enspace}{1.5pt}{}
\titleformat{\subsubsection}{\it}{\thesubsubsection.\enspace}{1.5pt}{}
\newtheorem{theo}{Theorem}[section]
\newtheorem{lemm}[theo]{Lemma}
\newtheorem{rema}{Remark}[section]
\numberwithin{equation}{section}
\def\th2{\frac{\theta}{2}}
\begin{document}
\title{ Optimal Decay Rates of Classical Solutions for the Full Compressible MHD Equations\hspace{-4mm}}
\author{Jincheng Gao$^\dag$  \quad Qiang Tao $^\ddag$ \quad Zheng-an Yao $^\dag$\\[10pt]
\small {$^\dag $School of Mathematics and Computational Science, Sun Yat-sen University,}\\
\small {510275, Guangzhou, P. R. China}\\[5pt]
\small {$^\ddag$ College of Mathematics and Computational Science, Shenzhen University,}\\
\small {518060, Shenzhen, P. R. China}\\[5pt]
}


\footnotetext{Email: \it gaojc1998@163.com(J.C.Gao), taoq060@126.com(Q.Tao), \it mcsyao@mail.sysu.edu.cn(Z.A.Yao).}

\date{}

\maketitle

\begin{abstract}
In this paper, we are concerned with optimal decay rates for higher order
spatial derivatives of classical solutions to the full compressible  MHD
equations in three dimensional whole space.
If the initial perturbation are small in $H^3$-norm and bounded in
$L^q(q\in \left[1, \frac{6}{5}\right))$-norm, we apply the Fourier
splitting method by Schonbek [Arch.Rational Mech. Anal. 88 (1985)]
to establish optimal decay rates for the second order spatial derivatives
of solutions and the third order spatial derivatives of magnetic field
in $L^2$-norm.
These results improve the work of Pu and Guo [Z. Angew. Math. Phys. 64 (2013) 519-538].

\vspace*{5pt}
\noindent{\it {\rm Keywords:}}
Full compressible MHD equations, global classical solutions, optimal decay rate, Fourier splitting method.

\vspace*{5pt}
\noindent{\it {\rm 2010 Mathematics Subject Classification:}} 76W05, 35Q35, 35D05, 76X05.
\end{abstract}


\section{Introduction}
\quad In this paper, we are concerned with the compressible viscous and
heat-conductive magnetohydrodynamic (In short, MHD) equations
in the Eulerian coordinates
\begin{equation}\label{1.1}
\left\{
\begin{aligned}
&\rho_t+{\rm div}(\rho u)=0,\\
&(\rho u)_t+{\rm div}(\rho u\otimes u)-\mu \Delta u-(\mu+\lambda)\nabla {\rm div}u
  +\nabla P(\rho, \theta)=({\rm curl} B)\times B,\\
&c_\nu [(\rho \theta)_t+{\rm div}(\rho u \theta)]-\kappa\Delta \theta+\theta \partial_\theta P(\rho, \theta){\rm div}u
=2\mu|D(u)|^2+\lambda({\rm div}u)^2+\nu |{\rm curl} B|^2,\\
&B_t -{\rm curl}(u \times B)=\nu \Delta B, \ {\rm div} B=0,
\end{aligned}
\right.
\end{equation}
where $(x, t)\in \mathbb{R}^3 \times \mathbb{R}^+$.
Here the unknown functions $\rho, u=(u_1, u_2, u_3)^{tr}$, $\theta$ and
$B=(B_1, B_2, B_3)^{tr}$ represent the fluid density,
velocity, absolute temperature, and magnetic field respectively;
$D(u)$ is the deformation tensor and defined by
$$
D(u)=\frac{1}{2}\left[\nabla u+(\nabla u)^{tr}\right].
$$
The pressure function  $P(\rho, \theta)$ is smooth
and satisfies $P_{\rho}(1, 1)>0$ and $P_{\theta}(1, 1)>0$
in a neighborhood of $(1, 1)$.
The constants $\mu $ and $\lambda$ are the first and second viscosity
coefficients respectively and satisfy the physical restrictions
$$
\mu>0, \quad 2\mu+3\lambda \ge 0.
$$
Positive constants $c_v, \kappa$, and $\nu$ are respectively the heat
capacity, the ratio of the heat conductivity coefficient over the heat capacity,
and the magnetic diffusivity acting as a magnetic diffusion coefficient of
the magnetic field. For the sake of simplicity, we assume $c_\nu, P_\rho(1,1)$
and $P_\theta(1,1)$ to be $1$.
To complete the system \eqref{1.1}, the initial data are given by
\begin{equation}\label{1.2}
\left.(\rho, u, \theta, B)(x,t)\right|_{t=0}=(\rho_0(x), u_0(x), \theta_0(x), B_0(x)).
\end{equation}
Furthermore, as the spatial variable tends to infinity, we assume
\begin{equation}\label{1.3}
\underset{|x|\rightarrow \infty}{\lim}(\rho_0-1, u_0, \theta_0-1, B_0)(x)=0.
\end{equation}

The compressible MHD systems are combination of the compressible Navier-
Stokes equations of fluid dynamics and Maxwell's equations of electromagnetism.
On the other hand, although the electric field $E$ does not apper in \eqref{1.1},
it can be written in terms of the magnetic field and the velocity as follows
$$
E=\nu {\rm curl}B-u \times B
$$
by the moving conductive flow in the magnetic field.
Obviously, the compressible MHD systems reduce to the full compressible
Navier-Stokes equations when there is no electro-magnetic effect(i.e. $B \equiv 0)$.

In this paper, we are concerned with the optimal decay rates for higher order
spatial derivatives of solutions to the full compressible MHD equations
in three-dimensional whole space.
Since the study of the asymptotic behavior of the MHD equations kept in step with
the Navier-Stokes equations, we recall some studies on the convergence rates for the
compressible Navier-Stokes equations with or without external forces.
When there is no external force,  the convergence rates of solutions for the
compressible Navier-Stokes equations to the steady state have been investigated extensively.
First, Matsumura and Nishida \cite{Matsumura-Nishida1} established global existence
of small solutions in $H^3$-norm and proved that the first order spatial derivatives of solutions
in $H^1$-norm converges to zero as the time goes to infinity in three-dimensional whole space.
At the same time, Matsumura and Nishida \cite{Matsumura-Nishida2} obtained the following
convergence rate for all $t \ge 0$,
$$
\|(\rho-1, u, \theta-1)(t)\|_{H^2}\lesssim (1+t)^{-\frac{3}{4}},
$$
if the small initial disturbance belongs to $H^3(\mathbb{R}^3) \cap L^1(\mathbb{R}^3)$.
For the small initial perturbation belongs to $H^3$ only,
Matsumura \cite{Matsumura} took  weighted energy method to show the time decay rates
$$
\|\nabla^k(\rho-1, u, \theta-1)(t)\|_{L^2}\lesssim (1+t)^{-\frac{k}{2}}
$$
for $k=1,2$, and
$$
\|(\rho-1,u, \theta-1)(t)\|_{L^\infty}\lesssim (1+t)^{-\frac{3}{4}}.
$$
For the same system, Ponce \cite{Ponce} gave the optimal $L^p$ convergence rate
$$
\|\nabla^l(\rho-1, u, \theta-1)(t)\|_{L^p}
\lesssim (1+t)^{-\frac{n}{2}\left(1-\frac{1}{p}\right)-\frac{l}{2}}
$$
for $2 \le p \le \infty$ and $l = 0, 1, 2$, if the small initial disturbance
belongs to $H^s(\mathbb{R}^n) \cap W^{s,1}(\mathbb{R}^n)$ with the integer
$s \ge [n/2]+ 3$ and the space dimension $n = 2$ or $3$.
In order to establish optimal decay rates for higher order spatial derivatives
of solutions, Guo and Wang \cite{Guo-Wang} developed a general energy method
to build the time convergence rates as follows
$$
\|\nabla^l(\rho-1, u)(t)\|_{H^{N-l}}^2 \lesssim (1+t)^{-(l+s)}
$$
for $0 \le l \le N-1$ by assuming the initial perturbation
are bounded in $\dot{H}^{-s}(s\in [0, \frac{3}{2}))$-norm
instead of $L^1$-norm.
On the other hand, the study of large-time behavior in $L^p(1\le p \le \infty)$ spaces
and pointwise estimates were developed in \cite{{Hoff-Zumbrun1},{Hoff-Zumbrun2},{Liu-Wang}}.
For example, Hoff and Zumbrun \cite{Hoff-Zumbrun1} studied the isentropic viscous fluid
in $\mathbb{R}^n(n \ge 2)$ and obtained
$$
\|(\rho-1, \rho u)(t)\|_{L^p}
\lesssim
\left\{
\begin{aligned}
&  t^{-\frac{n}{2}\left(1-\frac{1}{p}\right)},&2\le p\le \infty,\\
&  t^{-\frac{n}{2}\left(1-\frac{1}{p}\right)+\frac{n-1}{4}\left(\frac{p}{2}-1\right)}L_n(t),
  &1\le p <2,\\
\end{aligned}
\right.
$$
for all large $t > 0$, if the small initial disturbance belongs to
$H^s(\mathbb{R}^n) \cap L^1(\mathbb{R}^n)$ with the integer
$s\ge[n/2] + 3$, where $L_n(t)$ equals $\log(1 + t)$ if $n = 2$ and $1$ otherwise.
This result was later generalized by Kobayashi and Shibata \cite{Kobayashi-Shibata}
and Kagei and Kobayashi \cite{{Kagei-Kobayashi1},{Kagei-Kobayashi2}}
to the viscous and heat-conductive fluid and also to the half space problem but
without the smallness of $L^1$-norm of the initial disturbance.
When there is an external potential force $F =-\nabla \Phi(x)$, there are also some
results on the convergence rate for solutions to the compressible viscous
Navier-Stokes equations. For this case, when the initial perturbation is not
assumed in $L^1$, the analysis only on the Sobolev space $H^s(\mathbb{R}^3)$ yields a slower
(than optimal) decay \cite{{Deckelnick1},{Deckelnick2}}.
If the initial perturbation belongs to $L^1$ additionally, Duan et al. \cite{{Duan1}, {Duan2}}
established optimal decay rates for the solutions and its first order spatial
derivatives as follows

$$
\|\nabla^k(\rho-1, u, \theta-1)(t)\|_{H^{3-k}}
\lesssim (1+t)^{-\frac{3+2k}{4}},
$$
where $k=0,1$.

Motivated by the study of optimal decay rates for Navier-Stokes equations, the investigation
of time convergence rates of solutions to  the MHD equations has aroused many researchers' interests.
First of all, under the $H^3$-framework,  Li and Yu \cite{Li-Yu} and Chen and Tan \cite{Chen-Tan}
not only established the global existence of classical solutions, but also obtained
the time decay rates for the three-dimensional compressible MHD equations by assuming the initial data belong to $L^1$ and $L^q( q \in \left[1, \frac{6}{5}\right))$ respectively.
More precisely, Chen and Tan \cite{Chen-Tan} built the time decay rates
\begin{equation}\label{Decay-Chen-Tan}
\|\nabla^k (\rho-1, u, B)(t)\|_{H^{3-k}}\lesssim (1+t)^{-\frac{3}{2}\left(\frac{1}{q}-\frac{1}{2}\right)-\frac{k}{2}},
\end{equation}
where $k=0,1$. These decay rates \eqref{Decay-Chen-Tan} have also been established by
Li and Yu \cite{Li-Yu} for the case $q=1$.
Motivated by the work of Guo and Wang \cite{Guo-Wang},
Tan and Wang \cite{Tan-Wang} established
the optimal time decay rates for the higher order spatial derivatives of solutions
if the initial perturbation belongs to $H^N\cap\dot{H}^{-s}\left(N\ge3, s \in \left[0, \frac{3}{2}\right)\right)$.
More precisely, they built the following time decay rates
$$
\|\nabla^k (\rho-1, u, B)(t)\|_{H^{N-k}}\lesssim (1+t)^{-\frac{k+s}{2}},
$$
where $k=0,1,...,N-1$.
Following the spirit of work \cite{Gao-Tao-Yao}, we (see \cite{Gao-Chen-Yao})
establish the following time decay rates
for all $t \ge T^*(T^*$ is a positive constant$)$
\begin{equation}\label{Decay-Gao-Chen-Yao}
\begin{aligned}
&\|\nabla^2(\rho-1)(t)\|_{H^{1}}+\|\nabla^2 u(t)\|_{H^{1}}\lesssim (1+t)^{-\frac{7}{4}},\\
&\|\nabla^m B(t)\|_{H^{3-m}}\lesssim (1+t)^{-\frac{3+2m}{4}},\\
\end{aligned}
\end{equation}
where $m=2,3$.  It is easy to see that the time decay rates \eqref{Decay-Gao-Chen-Yao}
is better than decay rates \eqref{Decay-Chen-Tan} since \eqref{Decay-Gao-Chen-Yao}
provides faster time decay rates for the higher order spatial derivatives of solutions.
For the full compressible MHD equations \eqref{1.1}, Pu and Guo \cite{Pu-Guo}
established time decay rates for the classical solutions in three-dimensional whole space
as follows
\begin{equation}\label{Decay-Pu-Guo}
\|\nabla^k(\rho-1,u,\theta-1, B)(t)\|_{H^{3-k}}\lesssim (1+t)^{-\frac{3}{2}\left(\frac{1}{q}-\frac{1}{2}\right)-\frac{k}{2}},
\end{equation}
where $k=0,1$, if the initial perturbation are small in $H^3$-norm
and bounded in $L^q\left(q\in \left[1, \frac{6}{5}\right)\right)$-norm.

In this paper, we are concerned with optimal decay rates for the higher order
spatial derivatives of classical solutions in $L^2$-norm to the full compressible
MHD equations in three dimensional whole space.
For the classical incompressible Navier-Stokes equations, Schonbek and Wiegner
\cite{{Schonbek1},{Schonbek-Wiegner}} applied the inductive argument and Fourier
splitting method (see \cite{Schonbek2}) to establish optimal decay rates
for higher order derivatives norm after having the optimal decay rates
of solutions and its first order spatial derivatives at hand.
Motivated by \cite{{Schonbek1},{Schonbek-Wiegner}},  we move the nonlinear terms
to the right hand side of \eqref{1.1}$_4$ and deal with the nonlinear terms
as external force with the property on fast time decay rates.
Then, the application of Fourier splitting method helps us to establish optimal
time decay rates for the higher order spatial derivatives of magnetic field in $L^2$-norm.
Since the equation \eqref{1.1} is hyperbolic-parabolic type,
then the optimal decay rate for the second order spatial derivatives of solutions
are somewhat complicated. More precisely, denoting $\varrho=\rho-1$ and $\sigma=\theta-1$,
the \eqref{1.1} transforms into the system \eqref{eq1}.
Then, for the homogeneous system \eqref{eq1}(i.e. $S_1=S_2=S_3=S_4=0$),
it is easy to establish following energy inequality
\begin{equation}\label{1.8}
\frac{d}{dt}\|\nabla^k (\varrho, u, \sigma)\|_{L^2}^2
+C\|\nabla^{k+1} (u, \sigma)\|_{L^2}^2 \le 0,
\end{equation}
where $k=2, 3$.
In order to apply the Fourier splitting method to build optimal decay rate for the
second order derivatives norm of solution, we need to rediscover the dissipative
estimates for $\varrho$. Then, it is easy to verify the following
differential inequality
\begin{equation}\label{1.9}
\frac{d}{dt}\int \nabla^2 u \cdot \nabla^3 \varrho dx
+C\|\nabla^3 \varrho\|_{L^2}^2\le \|\nabla^3 u\|_{H^1}^2+\|\nabla^3 \sigma\|_{L^2}^2.
\end{equation}
The combination of \eqref{1.8} and \eqref{1.9} yields directly
\begin{equation}\label{1.10}
\frac{d}{dt} \mathcal{E}_2^3(t)
+C(\|\nabla^3 \varrho\|_{L^2}^2+ \|\nabla^3 (u, \sigma)\|_{H^1}^2)
\le 0,
\end{equation}
where $\mathcal{E}_2^3(t)$ is equivalent to $\|\nabla^2(\varrho, u, \sigma)\|_{H^1}^2$.
Following the spirit of the Fourier splitting method, we need to transform the inequality
\eqref{1.10} as the form
$$
\frac{d}{dt} \mathcal{E}_2^3(t)
+\frac{C}{2}(2\|\nabla^3 \varrho\|_{L^2}^2+ \|\nabla^3 (u, \sigma)\|_{H^1}^2)
\le 0.
$$
Then, it is easy to establish optimal
decay rates for the second order spatial derivatives of solutions
in $L^2$-norm(see Lemma \ref{lemma2.6} for detail).
Finally, one also establishes time convergence rates for the mixed space-time
derivatives of solutions.

\textbf{Notation:} In this paper, we use $H^s(\mathbb{R}^3)( s\in \mathbb{R})$ to
denote the usual Sobolev spaces
with norm $\|\cdot\|_{H^s}$ and $L^p(\mathbb{R}^3)(1\le p \le \infty)$ to denote the usual $L^p$ spaces with norm $\| \cdot \|_{L^p}$. We define
$$
\nabla^k v=
\left\{\left.\partial_x^\alpha v_i\right||\alpha|=k,~i=1,2,3\right\}
,~v=(v_1, v_2, v_3).
$$
One also denotes the Fourier transform by $\mathscr{F}(f):=\hat{f}$ .
The notation $a \lesssim b$ means that $a \le C b$ for a universal constant $C>0$ independent of
time $t$.
The notation $a \approx b$ means $a \lesssim b$ and $b \lesssim a$.
For the sake of simplicity, we write $\int f dx:=\int _{\mathbb{R}^3} f dx.$

The main results on optimal decay rates for higher order spatial derivatives of solutions
to the full compressible MHD equations \eqref{1.1}-\eqref{1.3} can be stated as follows.

\begin{theo}\label{Theorem1.1}
Assume that the initial data $(\rho_0-1,u_0,\theta_0-1, B_0)\in H^3
\cap{L^q} (q \in \left[1, \frac{6}{5}\right))$ and there exists a
small constant $\delta_0>0$ such that
\begin{equation}\label{smallness-condition}
\|(\rho_0-1,u_0,\theta_0-1, B_0)\|_{H^3} \le \delta_0,
\end{equation}
then the global classical solution $(\rho, u, \theta, B)$ of
Cauchy problem \eqref{1.1}-\eqref{1.3} has the following decay rates
for all $t \ge T^*(T^*~\text{is~a~positive~constant})$,
\begin{equation}\label{Decay1}
\begin{aligned}
&\|\nabla^2 (\rho-1, u, \theta-1)(t)\|_{H^1} \le  C(1+t)^{-\frac{3}{2}\left(\frac{1}{q}-\frac{1}{2}\right)-1},\\
&\|\nabla^k  B (t)\|_{L^2} \le  C(1+t)^{-\frac{3}{2}\left(\frac{1}{q}-\frac{1}{2}\right)-\frac{k}{2}},
\end{aligned}
\end{equation}
where $k=2,3$.
\end{theo}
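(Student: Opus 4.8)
The plan is to treat the hydrodynamic triple $(\varrho,u,\sigma)=(\rho-1,u,\theta-1)$ and the magnetic field $B$ by two parallel applications of Schonbek's Fourier splitting method, in each case turning the dissipation supplied by the energy estimates into a linear damping of the associated energy. Throughout I would work on $t\ge T^*$ with the shrinking low-frequency ball $S(t)=\{\xi:|\xi|^2\le C_1/(1+t)\}$ and set $g(t)^2=C_1/(1+t)$, so that $g(t)^2\le 1$; the constant $C_1$ gets fixed only at the end. The sole external inputs are the suboptimal rates \eqref{Decay-Pu-Guo} of Pu and Guo, used exclusively to bound low-frequency remainders and quadratic source terms. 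Note that the first line of \eqref{Decay1} concerns only $(\varrho,u,\sigma)$, so the magnetic coupling $(\mathrm{curl}\,B)\times B$ and the Joule term enter the triple merely as quadratic sources, which the smallness \eqref{smallness-condition} and \eqref{Decay-Pu-Guo} render harmless; I therefore assume the combined inequality \eqref{1.10} in the sharpened form
$$
\frac{d}{dt}\mathcal{E}_2^3(t)+\frac{C}{2}\bigl(2\|\nabla^3\varrho\|_{L^2}^2+\|\nabla^3(u,\sigma)\|_{H^1}^2\bigr)\le 0 .
$$

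Because $g^2\le1$, one copy of $\|\nabla^3\varrho\|_{L^2}^2$ already dominates $g^2\|\nabla^3\varrho\|_{L^2}^2$; this is exactly where the doubled density dissipation is needed, since the hyperbolic–parabolic coupling produces no term of order $\|\nabla^4\varrho\|_{L^2}^2$ for the splitting to act upon. The remaining dissipation, together with $\|\nabla^4(u,\sigma)\|_{L^2}^2$, I feed into the Plancherel splitting $\|\nabla^{l+1}V\|_{L^2}^2\ge g^2\bigl(\|\nabla^{l}V\|_{L^2}^2-\int_{S(t)}|\xi|^{2l}|\hat V|^2\,d\xi\bigr)$, which recovers $g^2\mathcal{E}_2^3$ up to a low-frequency error, giving
$$
\frac{d}{dt}\mathcal{E}_2^3(t)+c\,g^2\,\mathcal{E}_2^3(t)\le C g^2\int_{S(t)}\bigl(|\xi|^4+|\xi|^6\bigr)\bigl|\widehat{(\varrho,u,\sigma)}\bigr|^2\,d\xi .
$$
On $S(t)$ one has $|\xi|^4+|\xi|^6\le 2g^4$, so the right side is $\lesssim g^6\|(\varrho,u,\sigma)\|_{L^2}^2\lesssim(1+t)^{-3}(1+t)^{-3(1/q-1/2)}$ after inserting the $L^2$-rate from \eqref{Decay-Pu-Guo}. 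Multiplying by the integrating factor $(1+t)^{cC_1}$, choosing $C_1$ so large that $cC_1>3(1/q-1/2)+2$, and integrating over $[T^*,t]$ yields $\mathcal{E}_2^3(t)\lesssim(1+t)^{-3(1/q-1/2)-2}$; since $\mathcal{E}_2^3(t)\approx\|\nabla^2(\varrho,u,\sigma)\|_{H^1}^2$, taking square roots gives the first line of \eqref{Decay1}.

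For the magnetic field I would rewrite the fourth equation of \eqref{1.1} as $B_t-\nu\Delta B=N$ with $N=\mathrm{curl}(u\times B)$, a genuinely quadratic forcing, and run Fourier splitting on $\|\nabla^k B\|_{L^2}^2$ for $k=2,3$. Starting from $\tfrac12\tfrac{d}{dt}\|\nabla^kB\|_{L^2}^2+\nu\|\nabla^{k+1}B\|_{L^2}^2=\int\nabla^kN\cdot\nabla^kB\,dx$, I integrate by parts once, absorb $\tfrac{\nu}{2}\|\nabla^{k+1}B\|_{L^2}^2$ by Young's inequality, and keep the source $C\|\nabla^{k-1}N\|_{L^2}^2$; the surviving half of the dissipation then drives the splitting. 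The low-frequency remainder is $\lesssim g^{2k+2}\|B\|_{L^2}^2$, while $\|\nabla^{k-1}N\|_{L^2}^2$ is estimated by the Leibniz rule together with the derivative bounds in \eqref{Decay-Pu-Guo} and the first line of \eqref{Decay1}; both decay strictly faster than the target $(1+t)^{-3(1/q-1/2)-k}$. Integrating the resulting scalar inequality with the same integrating-factor argument gives $\|\nabla^k B\|_{L^2}^2\lesssim(1+t)^{-3(1/q-1/2)-k}$, which is the second line of \eqref{Decay1}.

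I expect the principal obstacle to be the density: the hyperbolic–parabolic structure provides dissipation only for $(u,\sigma)$, so the splitting can never reach $\|\nabla^4\varrho\|_{L^2}^2$, and the entire scheme hinges on the doubled $\|\nabla^3\varrho\|_{L^2}^2$ furnished by \eqref{1.9} together with $g^2\le1$; establishing \eqref{1.9} with admissible constants while keeping the cross term $\int\nabla^2u\cdot\nabla^3\varrho\,dx$ of strictly lower order than $\mathcal{E}_2^3$ is the delicate accounting. The secondary obstacle is that at the top order $k=3$ the source $\|\nabla^{k-1}N\|_{L^2}^2$ must be controlled without invoking the unavailable $\nabla^4u$, which is precisely what the single integration by parts is designed to circumvent, and every quadratic source must be shown to decay faster than the linear rate using only the budget $H^3$ and the rates \eqref{Decay-Pu-Guo}.
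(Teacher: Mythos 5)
Your overall architecture is the paper's: the hydrodynamic part (cross functional $\int\nabla^2 u\cdot\nabla^3\varrho\,dx$ supplying the doubled dissipation in $\|\nabla^3\varrho\|_{L^2}^2$, Fourier splitting with $g^2\le 1$ handling the density, low-frequency remainder and quadratic sources controlled through \eqref{Decay-Pu-Guo}) is exactly what Lemmas \ref{lemma2.3}--\ref{lemma2.6} do, and your $k=2$ magnetic estimate is Lemma \ref{lemma2.2}. Those two pieces are sound, granted that you assume rather than derive the key differential inequality whose proof occupies Lemmas \ref{lemma2.3}--\ref{lemma2.5}.

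The genuine gap is at $k=3$ for the magnetic field. After your single integration by parts and Young's inequality, the forcing is $\|\nabla^{2}N\|_{L^2}^2$, which contains $\|u\,\nabla^3 B\|_{L^2}^2\le\|u\|_{L^\infty}^2\|\nabla^3 B\|_{L^2}^2$. At that stage the only rate available for $\|\nabla^3 B\|_{L^2}^2$ is Pu--Guo's $(1+t)^{-3(1/q-1/2)-1}$ (your $k=2$ splitting improves $\nabla^2 B$ only, not $\nabla^3 B$, and the first line of \eqref{Decay1} says nothing about $B$), while $\|u\|_{L^\infty}^2\lesssim(1+t)^{-3(1/q-1/2)-1}$; the product decays with exponent $\frac{6}{q}-1$. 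But, as your own integrating-factor step shows, a forcing $(1+t)^{-a}$ yields only $(1+t)^{-(a-1)}$ after integration: the source must decay with exponent at least $\frac{3}{q}+\frac{5}{2}$, one full power more than the target exponent $\frac{3}{q}+\frac{3}{2}$, so the criterion ``decays strictly faster than the target'' is the wrong threshold. Here the deficit is $\frac{7}{2}-\frac{3}{q}\ge\frac{1}{2}$ for every $q\in\left[1,\frac{6}{5}\right)$, and even interpolating $\|u\|_{L^\infty}^2\lesssim\|\nabla u\|_{L^2}\|\nabla^2 u\|_{L^2}$ against the improved $\|\nabla^2 u\|_{L^2}$ of \eqref{Decay1} leaves a deficit $3-\frac{3}{q}>0$ for all $q>1$. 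The paper never lets this term become a source: in \eqref{228} it is kept paired with $\nabla^4 B$ and bounded by $\|u\|_{L^3}\|\nabla^3 B\|_{L^6}\|\nabla^4 B\|_{L^2}\lesssim\delta_0\|\nabla^4 B\|_{L^2}^2$, i.e.\ absorbed into the dissipation by smallness, so that \eqref{third-order} retains only sources which the improved rates of Lemmas \ref{lemma2.2} and \ref{lemma2.6} control at the sufficient order $(1+t)^{-\frac{6}{q}}$. Two repairs are available to you: absorb $u\cdot\nabla^3 B$ into $\frac{\nu}{2}\|\nabla^4 B\|_{L^2}^2$ exactly as in \eqref{228} (or keep it as $\|u\|_{L^\infty}^2\|\nabla^3 B\|_{L^2}^2$ and close with a Gronwall factor $\exp\bigl(C\int_0^\infty\|u\|_{L^\infty}^2\,dt\bigr)<\infty$, which is finite since $3(1/q-1/2)+1>1$); or run your $k=2$ splitting on the full $\|\nabla^2 B\|_{H^1}^2$ as Lemma \ref{lemma2.2} does, so that the intermediate rate $(1+t)^{-3(1/q-1/2)-2}$ for $\|\nabla^3 B\|_{L^2}^2$ is in hand before you attack $k=3$.
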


\begin{rema}
Obviously, \eqref{Decay1} provides faster time decay rates for the higher order spatial
derivatives of classical solutions than \eqref{Decay-Pu-Guo}.
Hence, the results in Theorem \ref{Theorem1.1} improve the work of Pu and Guo \cite{Pu-Guo}.
\end{rema}

\begin{rema}
By virtue of the Sobolev inequality and the results in Theorem \ref{Theorem1.1},
the global solution $(\rho, u, \theta, B)$ of problem \eqref{1.1}-\eqref{1.3} has the time decay rates
\begin{equation*}
\begin{aligned}
&\|(\rho-1, u, \theta-1)(t)\|_{L^p}
\le C(1+t)^{-\frac{3}{2}\left(\frac{1}{q}-\frac{1}{p}\right)},\\
&\|\nabla^k B(t)\|_{L^p}
\le C(1+t)^{{-\frac{3}{2}\left(\frac{1}{q}-\frac{1}{p}\right)}-\frac{k}{2}},
\end{aligned}
\end{equation*}
where $2 \le p \le \infty,$ and $\ k=0,1.$
\end{rema}

\begin{rema}
Although we only established the time decay rates under the $H^3$-framework
in Theorem \ref{Theorem1.1}, the method here can be
applied to the $H^N(N \ge 3)$-framework just following the idea
as Gao et al. \cite{Gao-Tao-Yao}. Hence,
if $(\rho_0-1, u_0, \theta_0-1, B_0)\in H^N \cap L^q(N \ge3, q \in \left[1, \frac{6}{5}\right))$
and there exists a small constant $\varepsilon_0>0$
such that
\begin{equation*}
\|(\rho_0-1,u_0,\theta_0-1, B_0)\|_{H^N} \le \varepsilon_0,
\end{equation*}
then the global classical solutions have the time decay rates
$$
\begin{aligned}
&\|\nabla^k(\rho-1,  u, \theta-1)(t)\|_{H^{N-k}}
\le C(1+t)^{-\frac{3}{2}\left(\frac{1}{q}-\frac{1}{2}\right)-\frac{k}{2}},\\
&\|\nabla^m B(t)\|_{H^{N-m}}
\le C(1+t)^{-\frac{3}{2}\left(\frac{1}{q}-\frac{1}{2}\right)-\frac{m}{2}},\\
\end{aligned}
$$
where $k=0, 1,..., N-1,$ and $m=0,1,2,..., N.$
\end{rema}

Finally, we establish decay rates for the mixed space-time derivatives of solutions
to the Cauchy problem \eqref{1.1}-\eqref{1.3}.

\begin{theo}\label{Theorem1.2}
Under all the assumptions in Theorem \ref{Theorem1.1}, then the global classical solution
$(\rho,u,\theta, B)$ of Cauchy problem \eqref{1.1}-\eqref{1.3} has the time decay rates
\begin{equation}\label{Decay3}
\begin{aligned}
&\|\nabla^k \rho_t (t)\|_{H^{2-k}}
+\|\nabla^k u_t(t)\|_{L^2}
+\|\nabla^k \theta_t(t)\|_{L^2}
\le C(1+t)^{-\frac{3}{2}\left(\frac{1}{q}-\frac{1}{2}\right)-\frac{k+1}{2}},\\
&\|\nabla^k B_t(t)\|_{L^2} \le C(1+t)^{-\frac{3}{2}\left(\frac{1}{q}-\frac{1}{2}\right)-\frac{k+2}{2}},\\
\end{aligned}
\end{equation}
where $k=0,1$.
\end{theo}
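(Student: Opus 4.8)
The plan is to extract the time derivatives algebraically from the evolution equations \eqref{1.1} and then bound each resulting expression in $L^2$ using only the spatial decay rates already secured in Theorem \ref{Theorem1.1} together with the lower-order rates \eqref{Decay-Pu-Guo}. Concretely, the continuity equation gives $\rho_t=-\mathrm{div}(\rho u)$, the momentum equation yields $\rho u_t=\mu\Delta u+(\mu+\lambda)\nabla\mathrm{div}\,u-\nabla P-\rho u\cdot\nabla u+(\mathrm{curl}\,B)\times B$, the magnetic equation gives $B_t=\nu\Delta B+\mathrm{curl}(u\times B)$, and the energy equation furnishes $\theta_t$ after writing $\rho\theta_t=(\rho\theta)_t-\rho_t\theta$. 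Writing $\varrho=\rho-1$, $\sigma=\theta-1$ and applying $\nabla^k$ ($k=0,1$) to these identities reduces \eqref{Decay3} to estimating a finite collection of linear and quadratic terms in $L^2$.

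First I would isolate the leading linear terms, whose decay fixes the exponents in \eqref{Decay3}. For the density, $\rho_t$ and $\nabla\rho_t$ are governed by $\mathrm{div}\,u$ and $\nabla\mathrm{div}\,u$, hence by $\|\nabla u\|_{L^2}$ and $\|\nabla^2 u\|_{L^2}$; for the velocity, the slowest linear contribution to $u_t$ is the pressure gradient $\nabla\varrho+\nabla\sigma$ (the viscous term $\Delta u$ decays faster), and to $\nabla u_t$ it is $\nabla^2\varrho+\nabla^2\sigma$; for the temperature, $\theta_t$ is controlled by $\mathrm{div}\,u$ and $\Delta\sigma$, with $\mathrm{div}\,u$ slower; and for the magnetic field, $B_t$ and $\nabla B_t$ are controlled by $\Delta B$ and $\nabla\Delta B$, i.e.\ by $\|\nabla^2 B\|_{L^2}$ and $\|\nabla^3 B\|_{L^2}$. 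Matching these against Theorem \ref{Theorem1.1} and \eqref{Decay-Pu-Guo} reproduces exactly the claimed rates; for example $\|\nabla^2 B\|_{L^2}$ decays like $(1+t)^{-\frac32(\frac1q-\frac12)-1}$, which is precisely the $k=0$ rate for $B_t$.

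Next I would dispose of the quadratic terms, which are products of the perturbation and its derivatives. Using $\|fg\|_{L^2}\lesssim\|f\|_{L^\infty}\|g\|_{L^2}$ and $\|f\|_{L^\infty}\lesssim\|\nabla f\|_{H^1}$, each such term factors into two quantities that already decay by Theorem \ref{Theorem1.1}; since every factor carries at least the base rate $\frac32(\frac1q-\frac12)$, which is strictly positive and no smaller than $\frac12$ for $q\in[1,\frac65)$, the nonlinear terms decay strictly faster than the linear ones and are therefore harmless. The only point requiring care is regularity bookkeeping: the worst cases, namely $\nabla^2\mathrm{div}(\varrho u)$ appearing in $\nabla^2\rho_t$, and $\nabla^2$ falling on the convection term $\rho u\cdot\nabla u$ or the Lorentz term $(\mathrm{curl}\,B)\times B$ in $\nabla u_t$, call for third-order spatial derivatives of $(\varrho,u,\sigma,B)$, all of which are supplied by the $H^3$ bound and the decay estimates of Theorem \ref{Theorem1.1}.

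I expect the main obstacle to lie not in any individual inequality but in the two algebraic reductions that precede them. For the temperature one must pass from $(\rho\theta)_t$ to $\theta_t$ and verify that the quadratic dissipation $2\mu|D(u)|^2+\lambda(\mathrm{div}\,u)^2+\nu|\mathrm{curl}\,B|^2$ decays at least as fast as the linear part, and for the velocity one must divide by $\rho=1+\varrho$ and control the resulting $\rho^{-1}$-type factors using the smallness of $\varrho$ in $L^\infty$. Once these reductions are organized, every remaining term is matched either to a linear spatial rate from Theorem \ref{Theorem1.1} and \eqref{Decay-Pu-Guo} or to a strictly faster nonlinear rate, so the estimates \eqref{Decay3} follow for all $t\ge T^\ast$.
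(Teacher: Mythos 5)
Your proposal is correct and is essentially the paper's own proof (Lemma \ref{lemma3.1}): the paper reads $\varrho_t$, $u_t$, $\sigma_t$, $B_t$ directly off the perturbation system \eqref{eq1}--\eqref{eq2} — which is exactly the algebraic reduction you describe, with the division by $\rho=1+\varrho$ already absorbed into the coefficients $h(\varrho)$, $g(\varrho)$ in $S_2$, $S_3$ — and then bounds each right-hand side in $L^2$ by H\"older/Sobolev, matching the linear terms to the rates of Theorem \ref{Theorem1.1} and \eqref{Decay-Pu-Guo} and checking that the quadratic terms decay faster. The only sharpening worth noting is that in the tightest case ($\nabla B_t$) the quadratic terms are faster not merely because each factor carries the base rate, but because each carries the base rate plus $\tfrac12$ and $q<\tfrac65$ makes \eqref{Large-Small-Relation} hold, which is precisely what the paper's estimate \eqref{3114} uses.
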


\begin{rema}
By virtue of the Sobolev inequality and the results in Theorem \ref{Theorem1.2},
the global solution $(\rho, u, \theta, B)$ of problem \eqref{1.1}-\eqref{1.3} has the time decay rates
$$
\begin{aligned}
&\|(\rho_t, u_t, \theta_t)(t)\|_{L^p}
\le C(1+t)^{-\frac{3}{2}\left(\frac{1}{q}-\frac{1}{p}\right)-\frac{1}{2}},\\
&\|B_t(t)\|_{L^p} \le C(1+t)^{{-\frac{3}{2}\left(\frac{1}{q}-\frac{1}{p}\right)}-1},
\end{aligned}
$$
where $2 \le p \le 6$.
\end{rema}

The rest of this paper is organized as follows. In section \ref{section2},
we establish the optimal time decay rates
for the higher order spatial derivatives of global classical solutions.
In section \ref{section3}, one hopes to build the
time convergence rates for the mixed space-time derivatives of solutions.

\section{Proof of Theorem \ref{Theorem1.1}}\label{section2}

\quad In this section, we will establish the optimal time decay rates for the higher
order spatial derivatives of solutions.
By computing directly, it is easy to deduce
$$
({\rm curl} B) \times B=(B\cdot \nabla)B-\frac{1}{2}\nabla (|B|^2),
$$
and
$$
{\rm curl}{(u \times B)}=u({\rm div}B)-(u\cdot \nabla) B+(B\cdot \nabla)u-B({\rm div}u).
$$
Denoting $\varrho=\rho-1$ and $\sigma=\theta-1$, the original full MHD equations \eqref{1.1}
can be rewritten in the perturbation form as follows
\begin{equation}\label{eq1}
\left\{
\begin{aligned}
&\varrho_t+{\rm div}u=S_1,\\
&u_t-\mu \Delta u-(\mu+\lambda)\nabla {\rm div}u+\nabla \varrho+\nabla \sigma=S_2,\\
&\sigma_t-\kappa \Delta \sigma+{\rm div}u=S_3,\\
&B_t-\nu \Delta B=S_4, \ {\rm div}B=0.
\end{aligned}
\right.
\end{equation}
Here $S_i(i=1,2,3,4)$ are defined as
\begin{equation}\label{eq2}
\left\{
\begin{aligned}
S_1=&-\varrho {\rm div}u-u\cdot \nabla \varrho,\\
S_2=&-u\cdot \nabla u-h(\varrho)[\mu\Delta u+(\mu+\lambda)\nabla {\rm div}u]
      -E(\varrho, \sigma)\nabla \varrho-F(\varrho, \sigma)\nabla \sigma\\
    & +g(\varrho)\left[B\cdot \nabla B-\frac{1}{2}\nabla(|B|^2)\right],\\
S_3=&-u\cdot \nabla \sigma- h(\varrho)(\kappa \Delta \sigma)-G(\varrho, \sigma){\rm div}u
      +g(\varrho)\left[2\mu|D(u)|^2+\lambda({\rm div}u)^2\right],\\
    &+g(\varrho)(\nu |{\rm curl} B|^2),\\
S_4=&-u \cdot \nabla B+B\cdot \nabla u -B {\rm div}u,
\end{aligned}
\right.
\end{equation}
where the functions of $\varrho$ and $\sigma$ are defined as
\begin{equation}\label{eq3}
\begin{aligned}
&h(\varrho)=\frac{\varrho}{\varrho+1},\quad
\ g(\varrho)=\frac{1}{\varrho+1},\quad
\ G(\varrho, \sigma)=\frac{(\sigma+1)P_\sigma(\varrho+1,\sigma+1)}{\varrho+1}-1,\\
&\ E(\varrho, \sigma)=\frac{P_\varrho(\varrho+1,\sigma+1)}{\varrho+1}-1,\quad
\ F(\varrho, \sigma)=\frac{P_\sigma(\varrho+1,\sigma+1)}{\varrho+1}-1.
\end{aligned}
\end{equation}
To complete the system \eqref{eq1}, the initial data are given by
\begin{equation}\label{eq4}
\left.(\varrho, u, \sigma, B)(x,t)\right|_{t=0}=(\varrho_0, u_0, \sigma_0, B_0)(x)
\rightarrow(0,0,0,0) \quad {\text as } \quad |x|\rightarrow \infty.
\end{equation}

First of all, Pu and Guo(see \cite{Pu-Guo} on Page $521$ ) have established the following estimates
\begin{equation}\label{smallness-condition1}
\|(\varrho, u, \sigma, B)\|_{H^3}\le C \|(\varrho_0, u_0, \sigma_0, B_0)\|_{H^3},
\end{equation}
which, together with the smallness of $\delta_0$(see \eqref{smallness-condition})
and Sobolev inequality, yields directly
$$
\frac{1}{2}\le \varrho+1 \le \frac{3}{2}.
$$
Hence, we immediately have
\begin{equation}\label{2.6}
\quad |g(\varrho)| \le C, \ |h(\varrho)| \le C|\varrho| \ \text{and }\
|G(\varrho, \sigma)|, |E(\varrho, \sigma)|, |F(\varrho, \sigma)|\le C(|\varrho|+|\sigma|),
\end{equation}
which will be used frequently to derive the temporal decay rates.
Furthermore, it is also easy to deduce
$$
\begin{aligned}
|\nabla E(\varrho, \sigma)|
&=|E_\varrho(\varrho, \sigma)\nabla \varrho+E_\sigma(\varrho, \sigma)\nabla \sigma|
\lesssim |\nabla \varrho|+|\nabla \sigma|,\\
|\nabla^2 E(\varrho, \sigma)|
&=|\nabla(E_\varrho(\varrho, \sigma)\nabla \varrho+E_\sigma(\varrho, \sigma)\nabla \sigma)|\\
&=|(E_{\varrho \varrho}(\varrho, \sigma)\nabla \varrho+E_{\varrho \sigma}(\varrho, \sigma)\nabla \sigma)\nabla \varrho
   +E_\varrho(\varrho, \sigma)\nabla^2 \varrho\\
&\quad \quad  +(E_{\sigma \varrho}(\varrho, \sigma)\nabla \varrho+E_{\sigma\sigma}(\varrho, \sigma)\nabla \sigma)\nabla \sigma
+E_\sigma(\varrho, \sigma)\nabla^2 \sigma |\\
&\lesssim |\nabla \varrho|^2+|\nabla \sigma|^2+|\nabla^2 \varrho|+|\nabla^2 \sigma|.
\end{aligned}
$$
In the same manner, we also obtain
\begin{equation}\label{2.7}
\begin{aligned}
&|\nabla h(\varrho)|,|\nabla g(\varrho)|\lesssim |\nabla \varrho|,\\
&|\nabla^2 h(\varrho)|,|\nabla^2 g(\varrho)|\lesssim |\nabla \varrho|^2+|\nabla^2 \varrho|,\\
&|\nabla E(\varrho, \sigma)|, |\nabla F(\varrho, \sigma)|, |\nabla G(\varrho, \sigma)|
 \lesssim |\nabla \varrho|+|\nabla \sigma|,\\
&|\nabla^2 E(\varrho, \sigma)|,|\nabla^2 F(\varrho, \sigma)|,|\nabla^2 G(\varrho, \sigma)|
\lesssim |\nabla \varrho|^2+|\nabla \sigma|^2+|\nabla^2 \varrho|+|\nabla^2 \sigma|.
\end{aligned}
\end{equation}

We state the classical Sobolev interpolation of the Gagliardo-Nirenberg inequality, refer to \cite{Nirenberg}.
\begin{lemm}\label{lemma2.1}
Let $0 \le m, \alpha \le l$ and the function $f\in C_0^\infty(\mathbb{R}^3)$, then we have
\begin{equation}\label{GN}
\|\nabla^\alpha f\|_{L^p} \lesssim \|\nabla^m f \|_{L^2}^{1-\theta} \|\nabla^l f \|_{L^2}^\theta,
\end{equation}
where $0\le \theta \le 1$ and $\alpha$ satisfy
\begin{equation*}
\frac{1}{p}-\frac{\alpha}{3}=\left(\frac{1}{2}-\frac{m}{3}\right)(1-\theta)
+\left(\frac{1}{2}-\frac{l}{3}\right)\theta.
\end{equation*}
\end{lemm}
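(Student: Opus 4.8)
The plan is to treat this as a frequency-space interpolation, reducing everything to the $L^2$ endpoint and then upgrading to general $p$ through a Sobolev embedding; this is the substance of Nirenberg's classical argument. First I would record that the admissibility relation for $\theta$ is forced by scaling and carries no real content: replacing $f(x)$ by $f(\lambda x)$ multiplies $\|\nabla^\alpha f\|_{L^p}$ by $\lambda^{\alpha-3/p}$ and multiplies $\|\nabla^m f\|_{L^2}^{1-\theta}\|\nabla^l f\|_{L^2}^{\theta}$ by $\lambda^{(m-3/2)(1-\theta)+(l-3/2)\theta}$, so equating the two powers of $\lambda$ reproduces exactly the stated constraint. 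Hence the only thing to prove is the inequality \eqref{GN} with a $\lambda$-independent constant.

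The core is the case $p=2$, where the constraint collapses to $\alpha=(1-\theta)m+\theta l$, i.e.\ $\alpha$ is a convex combination of $m$ and $l$. By Plancherel's theorem,
\[
\|\nabla^\alpha f\|_{L^2}^2=\int_{\mathbb{R}^3}|\xi|^{2\alpha}|\hat f(\xi)|^2\,d\xi
=\int_{\mathbb{R}^3}\left(|\xi|^{2m}|\hat f|^2\right)^{1-\theta}\left(|\xi|^{2l}|\hat f|^2\right)^{\theta}d\xi,
\]
and Hölder's inequality with conjugate exponents $\frac{1}{1-\theta}$ and $\frac{1}{\theta}$ bounds the right side by $\left(\int|\xi|^{2m}|\hat f|^2\right)^{1-\theta}\left(\int|\xi|^{2l}|\hat f|^2\right)^{\theta}=\|\nabla^m f\|_{L^2}^{2(1-\theta)}\|\nabla^l f\|_{L^2}^{2\theta}$, which is precisely \eqref{GN} at $p=2$.

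For $2<p<\infty$ I would reduce to the previous step via the homogeneous Sobolev embedding $\dot H^\beta(\mathbb{R}^3)\hookrightarrow L^p(\mathbb{R}^3)$ with $\beta=\alpha+3\left(\frac{1}{2}-\frac{1}{p}\right)$, which gives $\|\nabla^\alpha f\|_{L^p}\lesssim\|\nabla^\beta f\|_{L^2}$. A direct computation shows the stated constraint is equivalent to $\beta=(1-\theta)m+\theta l$, so the $p=2$ interpolation just established applies to $\|\nabla^\beta f\|_{L^2}$ and yields \eqref{GN}. All manipulations are first carried out for $f\in C_0^\infty(\mathbb{R}^3)$, where each frequency integral is finite, and the general statement follows by the density of $C_0^\infty$.

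The main obstacle is the endpoint behaviour rather than the generic case. When $p=\infty$ the embedding $\dot H^\beta\hookrightarrow L^\infty$ fails at the critical index, so that range must be handled separately: one bounds $\|\nabla^\alpha f\|_{L^\infty}\lesssim\int_{\mathbb{R}^3}|\xi|^{\alpha}|\hat f(\xi)|\,d\xi$ and estimates the frequency integral by splitting into $|\xi|\le R$ and $|\xi|>R$, applying Cauchy--Schwarz against the weights $|\xi|^m$ and $|\xi|^l$ respectively and optimizing in $R$; this is exactly where the inequalities among $m,\alpha,l$ enter, since they guarantee integrability near the origin and at infinity. Likewise the degenerate cases $\theta\in\{0,1\}$ are read off directly, and one must check that $\beta$ lands in the admissible range $(0,3/2)$ for the Sobolev step—otherwise one interpolates in two stages through an intermediate integer order. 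These endpoint and range bookkeeping points are the only genuinely delicate features of the proof.
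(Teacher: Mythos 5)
The paper does not prove this lemma at all --- it is quoted verbatim with a citation to Nirenberg \cite{Nirenberg} --- so there is no in-paper argument to compare against; your proof stands or falls on its own. Your route is the standard Fourier-analytic one for the $L^2$-based Gagliardo--Nirenberg inequality, and it is essentially correct in the range $2\le p\le\infty$: Plancherel plus H\"older at $p=2$; the homogeneous Sobolev embedding in the form $\|\nabla^\alpha f\|_{L^p}\lesssim\bigl\||\xi|^{\beta}\hat f\bigr\|_{L^2}$ with $\beta=\alpha+3\bigl(\tfrac12-\tfrac1p\bigr)$ for $2<p<\infty$; and a low/high frequency splitting with Cauchy--Schwarz at $p=\infty$, which works exactly when $m<\alpha+\tfrac32<l$, i.e. $0<\theta<1$ --- a restriction the lemma as stated omits but which you correctly identify as the place where the inequalities among $m,\alpha,l$ enter.

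Two caveats. First, your reduction silently assumes $p\ge 2$: the embedding step requires $\beta\ge\alpha$, i.e. $\tfrac1p\le\tfrac12$. The stated constraint with $0\le\theta\le1$ also generates exponents $p<2$ (this happens whenever $\alpha>(1-\theta)m+\theta l$; e.g. $m=0$, $\alpha=1$, $l=2$, $\theta=\tfrac14$ forces $p=\tfrac32$), and in that regime the inequality itself is false, not merely your proof of it: taking $f$ to be a sum of $N$ unit bumps with widely separated centers gives $\|\nabla f\|_{L^{3/2}}\approx N^{2/3}$ while $\|f\|_{L^2}^{3/4}\|\nabla^2 f\|_{L^2}^{1/4}\approx N^{1/2}$. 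So you should state explicitly that your argument (and the lemma) covers precisely $\theta\ge(\alpha-m)/(l-m)$, equivalently $p\ge2$; this costs nothing here, since the paper only invokes the lemma with $p\in\{2,3,6,\infty\}$. Second, your closing worry that ``$\beta$ must land in $(0,3/2)$,'' with a two-stage interpolation as a fallback, is misplaced: the index that must lie in $[0,\tfrac32)$ for the Sobolev step is the gain $\beta-\alpha=3\bigl(\tfrac12-\tfrac1p\bigr)$, which is automatic for $2\le p<\infty$; $\beta$ itself may be arbitrarily large, and no intermediate stage is ever needed.
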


First of all, we establish the optimal time decay rates for the second order spatial derivatives of magnetic field.

\begin{lemm}\label{lemma2.2}
Under the assumptions of Theorem \ref{Theorem1.1}, the magnetic field has the time decay rates
for all $t \ge 0$,
\begin{equation}\label{231}
\|\nabla^2 B (t)\|_{H^1}^2 \le C (1+t)^{-3\left(\frac{1}{q}-\frac{1}{2}\right)-2}.
\end{equation}
\end{lemm}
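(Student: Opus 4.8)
The plan is to run the Fourier splitting method on the combined first-order-above energy $\|\nabla^2 B\|_{H^1}^2=\|\nabla^2 B\|_{L^2}^2+\|\nabla^3 B\|_{L^2}^2$, feeding its low-frequency part and controlling its nonlinear forcing by the decay rates \eqref{Decay-Pu-Guo} of Pu and Guo. Throughout set $\alpha:=\frac{3}{2}(\frac{1}{q}-\frac{1}{2})$, so that \eqref{Decay-Pu-Guo} with $k=1$ gives $\|\nabla B\|_{L^2}^2+\|\nabla^2 B\|_{L^2}^2+\|\nabla^2 u\|_{L^2}^2+\|\nabla^3 u\|_{L^2}^2\lesssim(1+t)^{-2\alpha-1}$ and $\|u\|_{L^\infty}^2+\|B\|_{L^\infty}^2\lesssim(1+t)^{-2\alpha-1}$, while the standing hypothesis $q\in[1,\frac{6}{5})$ is exactly $\alpha>\frac12$. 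The one structural fact that makes the argument close is that, because $\mathrm{div}\,B=0$, the nonlinearity in \eqref{eq1}$_4$ is a pure divergence,
\[
S_4=\mathrm{div}\,\mathcal{G},\qquad \mathcal{G}:=u\otimes B-B\otimes u,
\]
a quadratic, derivative-free expression in $(u,B)$.

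First I would establish the energy inequality. Applying $\nabla^k$ $(k=2,3)$ to \eqref{eq1}$_4$, testing against $\nabla^k B$ and summing, the divergence form of $S_4$ lets me integrate by parts once so that the surplus derivative always lands on $B$:
\[
\frac{1}{2}\frac{d}{dt}\|\nabla^2 B\|_{H^1}^2+\nu\|\nabla^3 B\|_{H^1}^2
=-\int\nabla^2\mathcal{G}:\nabla^3 B\,dx-\int\nabla^3\mathcal{G}:\nabla^4 B\,dx.
\]
Here $\nu\|\nabla^3 B\|_{H^1}^2=\nu(\|\nabla^3 B\|_{L^2}^2+\|\nabla^4 B\|_{L^2}^2)$; the fourth-order dissipation is legitimate because $B$ solves a genuine heat equation, so the global solution lies in $L^2_t\dot H^4$, exactly as the top-order dissipation for $(u,\sigma)$ is already used in \eqref{1.8} with $k=3$. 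Young's inequality bounds the right-hand side by $\frac{\nu}{2}\|\nabla^3 B\|_{H^1}^2+C(\|\nabla^2\mathcal{G}\|_{L^2}^2+\|\nabla^3\mathcal{G}\|_{L^2}^2)$, and absorbing the first term yields
\[
\frac{d}{dt}\|\nabla^2 B\|_{H^1}^2+\nu\|\nabla^3 B\|_{H^1}^2\le C(\|\nabla^2\mathcal{G}\|_{L^2}^2+\|\nabla^3\mathcal{G}\|_{L^2}^2).
\]
Expanding $\nabla^j(u\otimes B)$ by the Leibniz rule and estimating every product by Hölder together with \eqref{GN} (e.g.\ $\|B\,\nabla^2 u\|_{L^2}\le\|B\|_{L^\infty}\|\nabla^2 u\|_{L^2}$, $\|\nabla u\,\nabla B\|_{L^2}\le\|\nabla u\|_{L^3}\|\nabla B\|_{L^6}$, $\|u\,\nabla^3 B\|_{L^2}\le\|u\|_{L^\infty}\|\nabla^3 B\|_{L^2}$, and similarly for the remaining pieces of $\nabla^3\mathcal{G}$), each term is a product of two factors decaying like $(1+t)^{-\alpha-1/2}$, so that $\|\nabla^2\mathcal{G}\|_{L^2}^2+\|\nabla^3\mathcal{G}\|_{L^2}^2\le C(1+t)^{-4\alpha-2}$.

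Now comes the Fourier splitting. Fix $g>2\alpha+2$ and set $S(t):=\{\xi:|\xi|^2\le\frac{g}{\nu(1+t)}\}$; on $S(t)^c$ one has $\nu|\xi|^2>\frac{g}{1+t}$, so
\[
\nu\|\nabla^3 B\|_{H^1}^2=\nu\!\int(|\xi|^6+|\xi|^8)|\hat{B}|^2d\xi
\ge\frac{g}{1+t}\Big(\|\nabla^2 B\|_{H^1}^2-\int_{S(t)}(|\xi|^4+|\xi|^6)|\hat{B}|^2d\xi\Big).
\]
Since on $S(t)$ we may bound $|\xi|^4+|\xi|^6\le\frac{g}{\nu(1+t)}(|\xi|^2+|\xi|^4)$, the low-frequency integral is $\lesssim(1+t)^{-1}(\|\nabla B\|_{L^2}^2+\|\nabla^2 B\|_{L^2}^2)\lesssim(1+t)^{-2\alpha-2}$ by \eqref{Decay-Pu-Guo}. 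Combining the last two displays therefore gives
\[
\frac{d}{dt}\|\nabla^2 B\|_{H^1}^2+\frac{g}{1+t}\|\nabla^2 B\|_{H^1}^2
\le C(1+t)^{-2\alpha-3}+C(1+t)^{-4\alpha-2}\le C(1+t)^{-2\alpha-3},
\]
the last step using $4\alpha+2\ge2\alpha+3$, i.e.\ $\alpha\ge\frac12$. Multiplying by the integrating factor $(1+t)^g$ and integrating from $0$ to $t$ (legitimate since $g>2\alpha+2$) yields $(1+t)^g\|\nabla^2 B\|_{H^1}^2\le\|\nabla^2 B_0\|_{H^1}^2+C(1+t)^{g-2\alpha-2}$, and dividing by $(1+t)^g$ gives the claimed rate $\|\nabla^2 B\|_{H^1}^2\le C(1+t)^{-2\alpha-2}$ of \eqref{231}.

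The main obstacle is the nonlinear estimate. If one estimates $\int\nabla^2 S_4\cdot\nabla^2 B\,dx$ without exploiting structure, the Leibniz expansion produces a term $\int B\cdot\nabla^3 u\,\nabla^2 B\,dx$ whose factor $\nabla^3 u$ decays only like $(1+t)^{-\alpha-1/2}$ and cannot be absorbed into the magnetic dissipation, giving the inadmissible rate $(1+t)^{-3\alpha-3/2}$. The divergence form $S_4=\mathrm{div}\,\mathcal{G}$ is exactly what removes it, trading the third $u$-derivative for a fourth $B$-derivative controlled by the parabolic dissipation. The second delicate point is quantitative: the scheme closes only because the forcing exponent $4\alpha+2$ dominates the low-frequency exponent $2\alpha+3$, which is precisely the content of the hypothesis $q<\frac{6}{5}$ $(\alpha>\frac12)$. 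Finally, for $0\le t\le1$ the estimate \eqref{231} is immediate from the uniform bound \eqref{smallness-condition1}, so the rate holds for all $t\ge0$.
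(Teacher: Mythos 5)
Your proposal is correct and follows essentially the same route as the paper: an energy inequality for $\|\nabla^2 B\|_{H^1}^2$ with full dissipation, the nonlinearity treated as a forcing term decaying like $(1+t)^{-\frac{6}{q}+1}$ via the Pu--Guo rates \eqref{Decay-Pu-Guo}, Schonbek's Fourier splitting on a ball of radius $\sim(1+t)^{-1/2}$, the exponent comparison that encodes $q<\frac{6}{5}$, and an integrating factor $(1+t)^{g}$. The only difference is cosmetic: you package the nonlinear estimate through the divergence form $S_4=\mathrm{div}(u\otimes B-B\otimes u)$ plus Young's inequality, whereas the paper achieves the identical effect term by term, integrating by parts in $I_1,I_2,I_3$ and $I\!I_1,I\!I_2,I\!I_3$ so the surplus derivative lands on $B$ and absorbing $\varepsilon\|\nabla^{k+1}B\|_{L^2}^2$ into the dissipation.
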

\begin{proof}
Taking second order spatial derivatives to $\eqref{eq1}_4$,
multiplying the resultant identity by $\nabla^k B$
and integrating over $\mathbb{R}^3$, then we have
\begin{equation}\label{222}
\begin{aligned}
&\frac{1}{2}\frac{d}{dt}\int |\nabla^2 B|^2 dx+\nu \int |\nabla^3 B|^2 dx\\
&=-\int \nabla^2 (u\cdot \nabla B)\nabla^2 B dx
  +\int \nabla^2 (B\cdot \nabla u)\nabla^2 B dx\\
&\quad -\int \nabla^2 (B{\rm div}u)\nabla^2 B dx\\
&=I_1+I_2+I_3.
\end{aligned}
\end{equation}
The application of decay rate \eqref{Decay-Pu-Guo}, Holder, Sobolev and Cauchy inequalities yields immediately
\begin{equation}\label{223}
\begin{aligned}
I_1
&=\int (\nabla u \nabla B+u \nabla^2 B)\nabla^3 B dx\\
&\lesssim (\|\nabla u\|_{L^3}\|\nabla B\|_{L^6}+\|u\|_{L^6}\|\nabla^2 B\|_{L^3})\|\nabla^3 B\|_{L^2}\\
&\lesssim \|\nabla u\|_{L^3}^2\|\nabla^2 B\|_{L^2}^2
           +\|\nabla u\|_{L^2}^2\|\nabla^2 B\|_{H^1}^2
           +\varepsilon\|\nabla^3 B\|_{L^2}^2\\
&\lesssim \|\nabla u\|_{H^1}^2\|\nabla^2 B\|_{H^1}^2+\varepsilon\|\nabla^3 B\|_{L^2}^2\\
&\lesssim (1+t)^{-\frac{3}{q}+\frac{1}{2}}(1+t)^{-\frac{3}{q}+\frac{1}{2}}
          +\varepsilon\|\nabla^3 B\|_{L^2}^2\\
&\lesssim (1+t)^{-\frac{6-q}{q}}+\varepsilon\|\nabla^3 B\|_{L^2}^2.
\end{aligned}
\end{equation}
In the same manner, it is easy to deduce
\begin{equation}\label{224}
\begin{aligned}
I_2
&=-\int (\nabla B \nabla u+B \nabla^2 u)\nabla^3 B dx\\
&\lesssim (\|\nabla u\|_{L^3}\|\nabla B\|_{L^6}+\|B\|_{L^6}\|\nabla^2 u\|_{L^3})\|\nabla^3 B\|_{L^2}\\
&\lesssim \|\nabla u\|_{H^1}^2\|\nabla^2 B\|_{L^2}^2
          +\|\nabla B\|_{L^2}^2\|\nabla^2 u\|_{H^1}^2+\varepsilon\|\nabla^3 B\|_{L^2}^2\\
&\lesssim \|\nabla u\|_{H^2}^2\|\nabla B\|_{H^1}^2+\varepsilon\|\nabla^3 B\|_{L^2}^2\\
&\lesssim (1+t)^{-\frac{3}{q}+\frac{1}{2}}(1+t)^{-\frac{3}{q}+\frac{1}{2}}
          +\varepsilon\|\nabla^3 B\|_{L^2}^2\\
&\lesssim (1+t)^{-\frac{6-q}{q}}+\varepsilon\|\nabla^3 B\|_{L^2}^2,
\end{aligned}
\end{equation}
and
\begin{equation}\label{225}
\begin{aligned}
I_3
&\lesssim \|\nabla u\|_{H^2}^2\|\nabla B\|_{H^1}^2+\varepsilon\|\nabla^3 B\|_{L^2}^2\\
&\lesssim (1+t)^{-\frac{6-q}{q}}+\varepsilon\|\nabla^3 B\|_{L^2}^2.
\end{aligned}
\end{equation}
Substituting \eqref{223}-\eqref{225} into \eqref{222} and
choosing $\varepsilon$ small enough, we get
\begin{equation}\label{226}
\frac{d}{dt}\int |\nabla^2 B|^2 dx+\nu \int |\nabla^3 B|^2 dx
\lesssim (1+t)^{-\frac{6-q}{q}}.
\end{equation}
Taking third order spatial derivatives to $\eqref{eq1}_4$,
multiplying the resulting identity by $\nabla^3 B$
and integrating over $\mathbb{R}^3$, then we have
\begin{equation}\label{227}
\begin{aligned}
&\frac{1}{2}\frac{d}{dt}\int |\nabla^3 B|^2 dx+\nu \int |\nabla^4 B|^2 dx\\
&=-\int \nabla^3 (u\cdot \nabla B)\nabla^3 B dx
  +\int \nabla^3 (B\cdot \nabla u)\nabla^3 B dx\\
&\quad -\int \nabla^3 (B{\rm div}u)\nabla^3 B dx\\
&=I\!I_1+I\!I_2+I\!I_3.
\end{aligned}
\end{equation}
By virtue of \eqref{smallness-condition1}, Holder, Sobolev and Cauchy inequalities, it arrives at
\begin{equation}\label{228}
\begin{aligned}
I\!I_1
&=\int (\nabla^2 u \nabla B+2\nabla u \nabla^2 B+u\nabla^3 B)\nabla^4 B dx\\
&\lesssim (\|\nabla^2 u\|_{L^3}\|\nabla B\|_{L^6}+\|\nabla u\|_{L^3}\|\nabla^2 B\|_{L^6}
         +\|u\|_{L^3}\|\nabla^3 B\|_{L^6})\|\nabla^4 B\|_{L^2}\\
&\lesssim \|\nabla^2 u\|_{H^1}^2\|\nabla^2 B\|_{L^2}^2+\|\nabla u\|_{H^1}^2\|\nabla^3 B\|_{L^2}^2
+(\varepsilon+\delta)\|\nabla^4 B\|_{L^2}^2.
\end{aligned}
\end{equation}
Similarly, it is easy to deduce
\begin{equation}\label{229}
\begin{aligned}
I\!I_2
&=-\int (\nabla^2 B\nabla u+2\nabla B \nabla^2 u+B\nabla^3 u)\nabla^4 B dx\\
&\lesssim (\|\nabla^2 B\|_{L^6}\|\nabla u\|_{L^3}+\|\nabla B\|_{L^6}\|\nabla^2 u\|_{L^3}
           +\|B\|_{L^\infty}\|\nabla^3 u\|_{L^2})\|\nabla^4 B\|_{L^2}\\
&\lesssim (\|\nabla^3 B\|_{L^2}\|\nabla u\|_{L^3}+\|\nabla^2 B\|_{L^2}\|\nabla^2 u\|_{L^3}
           +\|\nabla B\|_{H^1}\|\nabla^3 u\|_{L^2})\|\nabla^4 B\|_{L^2}\\
&\lesssim \|\nabla^2 B\|_{H^1}^2\|\nabla u\|_{H^2}^2
           +\|\nabla B\|_{H^1}^2\|\nabla^3 u\|_{L^2}^2
           +\varepsilon\|\nabla^4 B\|_{L^2}^2,
\end{aligned}
\end{equation}
and
\begin{equation}\label{2210}
I\!I_3\lesssim
\|\nabla^2 B\|_{H^1}^2\|\nabla u\|_{H^2}^2
           +\|\nabla B\|_{H^1}^2\|\nabla^3 u\|_{L^2}^2
           +\varepsilon\|\nabla^4 B\|_{L^2}^2.
\end{equation}
Substituting \eqref{228}-\eqref{2210} into \eqref{227}, then we obtain
\begin{equation}\label{third-order}
\frac{d}{dt}\int |\nabla^3 B|^2 dx+\nu\int |\nabla^4 B|^2 dx
\lesssim \|\nabla^2 B\|_{H^1}^2\|\nabla u\|_{H^2}^2
           +\|\nabla B\|_{H^1}^2\|\nabla^3 u\|_{L^2}^2,
\end{equation}
which, together with the decay rates \eqref{Decay-Pu-Guo}, yields directly
\begin{equation}\label{2211}
\frac{d}{dt}\int |\nabla^3 B|^2 dx+\nu\int |\nabla^4 B|^2 dx
\lesssim (1+t)^{-\frac{6-q}{q}}.
\end{equation}
Adding \eqref{227} to \eqref{2211}, it arrives at
\begin{equation}\label{2212}
\frac{d}{dt}\int (|\nabla^2 B|^2+|\nabla^3 B|^2) dx+\nu \int (|\nabla^3 B|^2+|\nabla^4 B|^2) dx
\lesssim (1+t)^{-\left(\frac{6}{q}-1\right)}.
\end{equation}
For some constant $R$ defined below, denoting the time sphere $S_0$
(see Schonbek \cite{Schonbek2}) by
$$
S_0:=\left\{\left.\xi \in \mathbb{R}^3\right| |\xi| \le \left(\frac{R}{1+t}\right)^{\frac{1}{2}}\right\},
$$
then we have
$$
\begin{aligned}
\int_{\mathbb{R}^3} |\nabla^3 B|^2 dx
&\ge \int_{\mathbb{R}^3/S_0}|\xi|^6 |\hat{B}|^2 d\xi \\
&\ge \frac{R}{1+t}\int_{\mathbb{R}^3} |\xi|^4 |\hat{B}|^2 d\xi
     -\left(\frac{R}{1+t}\right)^2\int_{S_0} |\xi|^2 |\hat{B}|^2d\xi\\
&\ge \frac{R}{1+t}\int_{\mathbb{R}^3} |\nabla^2 B|^2  dx
     -\left(\frac{R}{1+t}\right)^2\int_{\mathbb{R}^3} |\nabla B|^2dx,\\
\end{aligned}
$$
or equivalently
\begin{equation}\label{FSM}
\int |\nabla^3 B|^2 dx \ge \frac{R}{1+t}\int |\nabla^2 B|^2  dx-\left(\frac{R}{1+t}\right)^2\int |\nabla B|^2dx.
\end{equation}
Similarly, it is easy to deduce
\begin{equation}\label{2213}
\int |\nabla^4 B|^2 dx\ge \frac{R}{1+t}\int |\nabla^3 B|^2 dx-\left(\frac{R}{1+t}\right)^2\int |\nabla^2 B|^2dx.
\end{equation}
Substituting \eqref{FSM} and \eqref{2213} into \eqref{2212}
and applying the time decay rates \eqref{Decay-Pu-Guo}, we have
$$
\begin{aligned}
&\frac{d}{dt}\int (|\nabla^2 B|^2+|\nabla^3 B|^2) dx+\frac{R\nu}{1+t}\int (|\nabla^2 B|^2+|\nabla^3 B|^2) dx\\
&\lesssim \frac{R^2 \nu}{(1+t)^2}\int (|\nabla B|^2+|\nabla^2 B|^2) dx
           +(1+t)^{-\left(\frac{6}{q}-1\right)}\\
&\lesssim (1+t)^{-2}(1+t)^{-\frac{3}{q}+\frac{1}{2}}
           +(1+t)^{-\left(\frac{6}{q}-1\right)}\\
&\lesssim (1+t)^{-\left(\frac{3}{q}+\frac{3}{2}\right)}+(1+t)^{-\left(\frac{6}{q}-1\right)}.
\end{aligned}
$$
By virtue of $q\in \left[1, \frac{6}{5}\right)$, then it is easy to see that
\begin{equation}\label{Large-Small-Relation}
\left(\frac{6}{q}-1\right)-\left(\frac{3}{q}+\frac{3}{2}\right)
=\frac{5\left(\frac{6}{5}-q\right)}{2q} \ge 0.
\end{equation}
Thus, we have the following estimates
\begin{equation}\label{2214}
\frac{d}{dt}\int (|\nabla^2 B|^2+|\nabla^3 B|^2) dx
+\frac{R\nu}{1+t}\int (|\nabla^2 B|^2+|\nabla^3 B|^2) dx
\lesssim (1+t)^{-\left(\frac{3}{q}+\frac{3}{2}\right)}.
\end{equation}
If choosing
\begin{equation}\label{2215}
R=\frac{3+q}{q\nu}
\end{equation}
in \eqref{2214}, then we get
\begin{equation}\label{2216}
\frac{d}{dt}\int (|\nabla^2 B|^2+|\nabla^3 B|^2) dx+\frac{3+q}{q(1+t)}\int (|\nabla^2 B|^2+|\nabla^3 B|^2) dx
\lesssim (1+t)^{-\left(\frac{3}{q}+\frac{3}{2}\right)}.
\end{equation}
Multiplying \eqref{2216} by $(1+t)^{\frac{3+q}{q}}$, it arrives at
$$
\frac{d}{dt}\left[(1+t)^{\frac{3+q}{q}}\|\nabla^2 B\|_{H^1}^2\right]\le C(1+t)^{-\frac{1}{2}},
$$
which, integrating over $[0, t]$, gives
$$
\|\nabla^2 B (t)\|_{H^1}^2 \le
(1+t)^{-\left(\frac{3}{q}+1\right)}\left[\|\nabla^2 B_0\|_{H^1}^2+C(1+t)^{\frac{1}{2}}\right],
$$
or equivalently
$$
\|\nabla^2 B (t)\|_{H^1}^2 \le C(1+t)^{-\left(\frac{3}{q}+\frac{1}{2}\right)}.
$$
Therefore, we complete the proof of the lemma.
\end{proof}

Next, we establish the following differential inequality for the second order
spatial derivatives of solutions.

\begin{lemm}\label{lemma2.3}
Under all the assumptions in Theorem \ref{Theorem1.1}, then we have
\begin{equation}\label{231}
\frac{d}{dt}\|\nabla^2(\varrho, u, \sigma)\|_{L^2}^2
+(\mu \|\nabla^3 u\|_{L^2}^2+\kappa \|\nabla^3 \sigma\|_{L^2}^2)
\lesssim \varepsilon\|\nabla^3 \varrho\|_{L^2}^2+(1+t)^{-\frac{6-q}{q}}.
\end{equation}
\end{lemm}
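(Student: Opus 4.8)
The plan is to run the standard $\nabla^2$-level energy estimate on the first three equations of \eqref{eq1}, use the antisymmetry of the linear coupling to cancel the cross terms, and then estimate the nonlinearities so that everything except a controllable $\varrho$-contribution falls below the target rate $(1+t)^{-\frac{6-q}{q}}$. Concretely, I would apply $\nabla^2$ to $\eqref{eq1}_1$, $\eqref{eq1}_2$, $\eqref{eq1}_3$, test the three resulting identities against $\nabla^2\varrho$, $\nabla^2 u$, $\nabla^2\sigma$ respectively, integrate over $\mathbb{R}^3$ and sum over $|\alpha|=2$. Integrating the viscous and heat-conduction terms by parts produces the dissipation $\mu\|\nabla^3 u\|_{L^2}^2+(\mu+\lambda)\|\nabla^2\mathrm{div}\,u\|_{L^2}^2+\kappa\|\nabla^3\sigma\|_{L^2}^2$; since the restriction $2\mu+3\lambda\ge0$ forces $\mu+\lambda\ge\tfrac13\mu>0$, the $\mathrm{div}\,u$ term is nonnegative and may simply be discarded.

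The first point to verify is that the linear couplings cancel. The pressure terms $\nabla\varrho+\nabla\sigma$ in $\eqref{eq1}_2$, tested against $\nabla^2 u$, yield $-\int\nabla^2\varrho\,\nabla^2\mathrm{div}\,u\,dx-\int\nabla^2\sigma\,\nabla^2\mathrm{div}\,u\,dx$ after one integration by parts, and these are exactly cancelled by the $\mathrm{div}\,u$ terms coming from $\eqref{eq1}_1$ and $\eqref{eq1}_3$ tested against $\nabla^2\varrho$ and $\nabla^2\sigma$. Hence, up to the discarded nonnegative term, the left-hand side reduces to $\tfrac12\frac{d}{dt}\|\nabla^2(\varrho,u,\sigma)\|_{L^2}^2+\mu\|\nabla^3 u\|_{L^2}^2+\kappa\|\nabla^3\sigma\|_{L^2}^2$, while the entire right-hand side is the collection of cubic integrals $\sum_{|\alpha|=2}\int(\partial^\alpha S_1\,\partial^\alpha\varrho+\partial^\alpha S_2\cdot\partial^\alpha u+\partial^\alpha S_3\,\partial^\alpha\sigma)\,dx$, with $S_i$ given by \eqref{eq2}.

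The heart of the argument, and the main obstacle, is estimating these cubic integrals at the sharp rate, because the naive bound is too weak. For instance the transport contribution $\tfrac12\int(\mathrm{div}\,u)|\nabla^2\varrho|^2\,dx$ (obtained after integrating $-\nabla^2(u\cdot\nabla\varrho)$ by parts) decays only like $(1+t)^{-\frac{9}{2q}+\frac34}$ if one uses $\|\mathrm{div}\,u\|_{L^\infty}$, which is strictly slower than $(1+t)^{-\frac{6-q}{q}}$. The correct route is to redistribute derivatives so that every factor sits at the first-derivative level, where \eqref{Decay-Pu-Guo} supplies the decay $(1+t)^{-\frac{3}{2q}+\frac14}$; since the target exponent satisfies $-\frac{6-q}{q}=4\left(-\frac{3}{2q}+\frac14\right)$, each product must be arranged as effectively four gradient-level $L^2$ norms. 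For the $\varrho$-terms, which carry no dissipation, I would route the surplus derivative onto $\varrho$ via the Sobolev embedding $\|\nabla^2\varrho\|_{L^6}\lesssim\|\nabla^3\varrho\|_{L^2}$ and absorb it by Cauchy's inequality as $\varepsilon\|\nabla^3\varrho\|_{L^2}^2$ — exactly the term kept on the right of \eqref{231} — while bounding the residual coefficient (e.g. $\|\mathrm{div}\,u\|_{L^3}^2\lesssim\|\nabla u\|_{L^2}\|\nabla^2 u\|_{L^2}$) through the Gagliardo--Nirenberg inequality of Lemma \ref{lemma2.1}.

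The remaining nonlinearities are handled in the same spirit. The pressure-type products $E(\varrho,\sigma)\nabla\varrho$, $F(\varrho,\sigma)\nabla\sigma$, $G(\varrho,\sigma)\mathrm{div}\,u$ and the convective terms $u\cdot\nabla\varrho$, $u\cdot\nabla u$, $u\cdot\nabla\sigma$ are estimated using the pointwise bounds \eqref{2.6}--\eqref{2.7}, the smallness \eqref{smallness-condition1}, and the same interpolation, again producing the target rate plus the harmless $\varepsilon\|\nabla^3\varrho\|_{L^2}^2$; the magnetic quadratics $g(\varrho)\big[B\cdot\nabla B-\tfrac12\nabla(|B|^2)\big]$ and $g(\varrho)\big[2\mu|D(u)|^2+\lambda(\mathrm{div}\,u)^2+\nu|\mathrm{curl}\,B|^2\big]$ are even easier, since Lemma \ref{lemma2.2} furnishes the faster rate $\|\nabla^2 B\|_{H^1}^2\lesssim(1+t)^{-3(\frac1q-\frac12)-2}$. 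Finally, the factors $\nabla^3 u$ and $\nabla^3\sigma$ that unavoidably appear (for example from $h(\varrho)\Delta u$ and $h(\varrho)\Delta\sigma$) are absorbed into the viscous and heat-conduction dissipation on the left by taking $\varepsilon$ small relative to $\mu$ and $\kappa$, which leaves precisely the right-hand side of \eqref{231}.
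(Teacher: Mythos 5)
Your proposal is correct and follows essentially the same route as the paper: the $\nabla^2$-level energy identity with the linear couplings cancelling (the paper's \eqref{232}), the $(\mu+\lambda)\|\nabla^2\mathrm{div}\,u\|_{L^2}^2$ term discarded, the surplus derivative on $\varrho$ absorbed via Cauchy's inequality as $\varepsilon\|\nabla^3\varrho\|_{L^2}^2$, the $h(\varrho)$-terms controlled by the smallness $\delta_0$, and every remaining cubic term arranged as a product of two gradient-level squared norms so that the decay \eqref{Decay-Pu-Guo} yields exactly $(1+t)^{-\frac{6-q}{q}}$. The only cosmetic differences are that the paper produces $\nabla^3\varrho$ directly by one integration by parts (rather than via the commutator plus $\|\nabla^2\varrho\|_{L^6}\lesssim\|\nabla^3\varrho\|_{L^2}$) and estimates the magnetic terms with \eqref{Decay-Pu-Guo} rather than the sharper Lemma \ref{lemma2.2}; both variants are valid.
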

\begin{proof}
Taking $k-$th$(k=2,3)$ spatial derivatives on both hand sides of
\eqref{eq1}$_1$,\eqref{eq2}$_2$ and \eqref{eq2}$_3$,
multiplying the resultant identity by $\nabla^k \varrho$, $\nabla^k u$
and $\nabla^k \sigma$ respectively and integrating
over $\mathbb{R}^3$, then we have
\begin{equation}\label{232}
\begin{aligned}
&\frac{1}{2}\frac{d}{dt}\!\int\! (|\nabla^k \varrho|^2+|\nabla^k u|^2+|\nabla^k \sigma|^2) dx
+\!\int\! (\mu |\nabla^{k+1}u|^2+(\mu+\lambda)|\nabla^k {\rm div}u|^2+\kappa |\nabla^{k+1} \sigma|^2) dx\\
&=\int \nabla^k S_1 \cdot \nabla^k \varrho dx
+\int \nabla^k S_2  \cdot \nabla^k u dx
+\int \nabla^k S_3  \cdot \nabla^k \sigma dx.
\end{aligned}
\end{equation}
Taking $k=2$ in \eqref{232}, it arrives at
\begin{equation}\label{233}
\begin{aligned}
&\frac{1}{2}\frac{d}{dt}\int (|\nabla^2 \varrho|^2+|\nabla^2 u|^2+|\nabla^2 \sigma|^2) dx
+\int (\mu |\nabla^3u|^2+(\mu+\lambda)|\nabla^2 {\rm div}u|^2+\kappa |\nabla^3 \sigma|^2) dx\\
&=\!\int\! \nabla^2 (-\varrho {\rm div}u-u\cdot \nabla \varrho) \nabla^2 \varrho dx
  +\!\int\! \nabla^2 (-u\cdot \nabla u-
                   h(\varrho)[\mu\Delta u+(\mu+\lambda)\nabla {\rm div}u]) \nabla^2 u dx\\
&\quad +\int \nabla^2 (-E(\varrho, \sigma)\nabla \varrho-F(\varrho, \sigma)\nabla \sigma
             +g(\varrho)[B\cdot \nabla B-\frac{1}{2}\nabla(|B|^2)]) \nabla^2 u dx\\
&\quad +\int \nabla^2 (-u\cdot \nabla \sigma- h(\varrho)(\kappa \Delta \sigma)
       -G(\varrho, \sigma){\rm div}u
      +g(\varrho)\left[2\mu|D(u)|^2+\lambda({\rm div}u)^2\right]) \nabla^2 \sigma dx\\
&\quad +\int\nabla^2 (g(\varrho)(\nu |{\rm curl} B|^2)) \nabla^2 \sigma dx.
\end{aligned}
\end{equation}
Integrating by part and applying \eqref{Decay-Pu-Guo},
Holder, Sobolev  and Cauchy inequalities, we obtain
\begin{equation}\label{234}
\begin{aligned}
&\int \nabla^2 (-\varrho {\rm div}u-u\cdot \nabla \varrho) \nabla^2 \varrho dx\\
&=-\int(\varrho \nabla^2 u+2\nabla \varrho \nabla u +u \nabla^2 \varrho)\nabla^3 \varrho dx\\
&\lesssim (\|\varrho\|_{L^6}\|\nabla^2 u\|_{L^3}+\|\nabla \varrho\|_{L^6}\|\nabla u\|_{L^3}
           +\|u\|_{L^6}\|\nabla^2 \varrho\|_{L^3})\|\nabla^3 \varrho\|_{L^2}\\
&\lesssim \|\nabla \varrho\|_{L^2}^2\|\nabla^2 u\|_{H^1}^2
           +\|\nabla^2 \varrho\|_{L^2}^2\|\nabla u\|_{H^1}^2
          +\|\nabla u\|_{L^2}^2\|\nabla^2 \varrho\|_{H^1}^2+\varepsilon\|\nabla^3 \varrho\|_{L^2}^2\\
&\lesssim (1+t)^{-\frac{3}{q}+\frac{1}{2}}(1+t)^{-\frac{3}{q}+\frac{1}{2}}
           +\varepsilon\|\nabla^3 \varrho\|_{L^2}^2\\
&\lesssim (1+t)^{-\frac{6-q}{q}}+\varepsilon\|\nabla^3 \varrho\|_{L^2}^2.
\end{aligned}
\end{equation}
Following the idea as \eqref{234}, it is easy to deduce
\begin{equation}\label{235}
\begin{aligned}
\int \nabla^2 (-u \cdot \nabla u)\nabla^2 udx
\lesssim \|\nabla u\|_{H^1}^2\|\nabla^2 u\|_{H^1}^2+\varepsilon\|\nabla^3 u\|_{L^2}^2
\lesssim (1+t)^{-\frac{6-q}{q}}+\varepsilon\|\nabla^3 u\|_{L^2}^2.
\end{aligned}
\end{equation}
Integrating by part and applying \eqref{smallness-condition1}-\eqref{2.7},
Holder and Sobolev inequalities, it arrives at
\begin{equation}\label{236}
\begin{aligned}
&\int \nabla^2 (-h(\varrho)[\mu\Delta u+(\mu+\lambda)\nabla {\rm div}u])\nabla^2 u dx\\
&\lesssim \left(\|\nabla h(\varrho)\|_{L^3}\|\nabla^2 u\|_{L^6}+\|h(\varrho)\|_{L^\infty}\|\nabla^3 u\|_{L^2}\right)
           \|\nabla^3 u\|_{L^2}\\
&\lesssim \delta_0 \|\nabla^3 u\|_{L^2}^2.
\end{aligned}
\end{equation}
It follows from the integration by part, \eqref{Decay-Pu-Guo}, \eqref{2.6}, \eqref{2.7},
Holder and Sobolev inequalities that
\begin{equation}\label{237}
\begin{aligned}
&\int \nabla^2 [-E(\varrho, \sigma) \nabla \varrho]\nabla^2 u dx\\
&\lesssim \|\nabla E(\varrho, \sigma)\|_{L^3}\|\nabla \varrho\|_{L^6}\|\nabla^3 u\|_{L^2}
          +\|E(\varrho, \sigma)\|_{L^\infty}\|\nabla^2 \varrho\|_{L^2}\|\nabla^3 u\|_{L^2}\\
&\lesssim \|\nabla(\varrho, \sigma)\|_{L^3}^2\|\nabla^2 \varrho\|_{L^2}^2
          +\|(\varrho, \sigma)\|_{L^\infty}^2\|\nabla^2 \varrho\|_{L^2}^2+\varepsilon\|\nabla^3 u\|_{L^2}^2\\
&\lesssim \|\nabla(\varrho, \sigma)\|_{L^3}^2\|\nabla^2 \varrho\|_{L^2}^2
          +\|\nabla(\varrho, \sigma)\|_{H^1}^2\|\nabla^2 \varrho \|_{L^2}^2+\varepsilon\|\nabla^3 u\|_{L^2}^2\\
&\lesssim \|\nabla(\varrho, \sigma)\|_{H^1}^2\|\nabla^2 \varrho \|_{L^2}^2
          +\varepsilon\|\nabla^3 u\|_{L^2}^2\\
&\lesssim (1+t)^{-\frac{3}{q}+\frac{1}{2}}(1+t)^{-\frac{3}{q}+\frac{1}{2}}
          +\varepsilon\|\nabla^3 u\|_{L^2}^2\\
&\lesssim (1+t)^{-\frac{6-q}{q}}+\varepsilon\|\nabla^3 u\|_{L^2}^2.\\
\end{aligned}
\end{equation}
Similarly, it is easy to deduce
\begin{equation}\label{238}
\int \nabla^2 [-F(\varrho, \sigma) \nabla \sigma]\nabla^2 u dx
\lesssim  \|\nabla(\varrho, \sigma)\|_{H^1}^2\|\nabla^2  \sigma \|_{L^2}^2
          +\varepsilon\|\nabla^3 u\|_{L^2}^2\\
\lesssim  (1+t)^{-\frac{6-q}{q}}+\varepsilon\|\nabla^3 u\|_{L^2}^2.
\end{equation}
By integration by part, \eqref{Decay-Pu-Guo}, \eqref{2.6}, \eqref{2.7},
Holder, Sobolev and Cauchy inequalities, we get
\begin{equation}\label{239}
\begin{aligned}
&\int \nabla^2 (g(\varrho)(B \cdot \nabla B))\nabla^2 u dx\\
&\lesssim (\|\nabla g(\varrho)\|_{L^6}\|B\|_{L^6}\|\nabla B\|_{L^6}
           +\|g(\varrho)\|_{L^\infty}\|\nabla B\|_{L^3}\|\nabla B\|_{L^6})\|\nabla^3 u\|_{L^2}\\
&\quad  +\|g(\varrho)\|_{L^\infty}\|B\|_{L^6}\|\nabla^2 B\|_{L^3}\|\nabla^3 u\|_{L^2}\\
&\lesssim \|\nabla^2 \varrho\|_{L^2}^2\|\nabla B\|_{L^2}^2\|\nabla^2 B\|_{L^2}^2
           +\|\nabla B\|_{L^3}^2\|\nabla^2 B\|_{L^2}^2\\
&\quad     +\|\nabla B\|_{L^2}^2\|\nabla^2 B\|_{L^3}^2
           +\varepsilon\|\nabla^3 u\|_{L^2}^2\\
&\lesssim \|\nabla B\|_{H^1}^2\|\nabla^2 B\|_{H^1}^2+\varepsilon\|\nabla^3 u\|_{L^2}^2\\
&\lesssim (1+t)^{-\frac{3}{q}+\frac{1}{2}}(1+t)^{-\frac{3}{q}+\frac{1}{2}}
          +\varepsilon\|\nabla^3 u\|_{L^2}^2\\
&\lesssim  (1+t)^{-\frac{6-q}{q}}+\varepsilon\|\nabla^3 u\|_{L^2}^2.\\
\end{aligned}
\end{equation}
In the same manner, we obtain
\begin{equation}\label{2310}
\frac{1}{2}\int \nabla^2 (-g(\varrho)\nabla (|\nabla B|^2)) \nabla^2 u dx
\lesssim  (1+t)^{-\frac{6-q}{q}}+\varepsilon\|\nabla^3 u\|_{L^2}^2.
\end{equation}
Following the idea as \eqref{235} and \eqref{236} respectively, it is easy to deduce
\begin{equation}\label{2311}
\int \nabla^2 (-u\cdot \nabla \sigma)\nabla^2 \sigma  dx
\lesssim \|\nabla u\|_{H^1}^2\|\nabla^2 \sigma\|_{H^1}^2+\varepsilon\|\nabla^3 \sigma\|_{L^2}^2
\lesssim (1+t)^{-\frac{6-q}{q}}+\varepsilon \|\nabla^3 \sigma\|_{L^2}^2,
\end{equation}
and
\begin{equation}\label{2312}
\kappa\int \nabla^2(-h(\varrho) \nabla^2 \sigma)\nabla^2 \sigma  dx
\lesssim \delta_0 \|\nabla^3 \sigma\|_{L^2}^2.
\end{equation}
The application of integration by part, \eqref{Decay-Pu-Guo},
\eqref{2.6}, \eqref{2.7}, Holder, Sobolev and Cauchy inequalities yields
\begin{equation}\label{2313}
\begin{aligned}
&\int \nabla(G(\varrho, \sigma) {\rm div} u)\nabla^3 \sigma \ dx\\
&\lesssim (\|\nabla G(\varrho, \sigma)\|_{L^3}\|\nabla u\|_{L^6}
           +\|G(\varrho, \sigma)\|_{L^\infty}\|\nabla^2 u\|_{L^2})\|\nabla^3 \sigma\|_{L^2}\\
&\lesssim \|\nabla(\varrho, \sigma)\|_{L^3}^2\|\nabla^2 u\|_{L^2}^2
           +\|\nabla (\varrho, \sigma)\|_{H^1}^2\|\nabla^2 u\|_{L^2}^2
           +\varepsilon \|\nabla^3 \sigma\|_{L^2}^2\\
&\lesssim \|\nabla(\varrho, \sigma)\|_{H^1}^2\|\nabla^2 u\|_{L^2}^2
          +\varepsilon \|\nabla^3 \sigma\|_{L^2}^2\\
&\lesssim (1+t)^{-\frac{6-q}{q}}+\varepsilon \|\nabla^3 \sigma\|_{L^2}^2.
\end{aligned}
\end{equation}
Integrating by part and applying \eqref{Decay-Pu-Guo}, \eqref{2.6}, \eqref{2.7},
Holder and Sobolev inequalities, we get
\begin{equation}\label{2314}
\begin{aligned}
&\int \nabla^2 (g(\varrho)\left[2\mu|D(u)|^2+\lambda({\rm div}u)^2\right])\nabla^2 \sigma dx\\
&\approx \int \nabla(g(\varrho) |\nabla u|^2)\nabla^3 \sigma  dx\\
&\lesssim (\|\nabla g(\varrho)\|_{L^6}\|\nabla u\|_{L^6}^2
           +\|g(\varrho)\|_{L^\infty}\|\nabla u\|_{L^3}\|\nabla^2 u\|_{L^6})
           \|\nabla^3 \sigma\|_{L^2}\\
&\lesssim \|\nabla^2 \varrho\|_{L^2}^2\|\nabla^2 u\|_{L^2}^2
           +\|\nabla u\|_{H^1}^2\|\nabla^3 u\|_{L^2}^2
           +\varepsilon \|\nabla^3 \sigma\|_{L^2}^2\\
&\lesssim \|\nabla (\varrho, u)\|_{H^1}^2\|\nabla^2 u\|_{H^1}^2
          +\varepsilon \|\nabla^3 \sigma\|_{L^2}^2\\
&\lesssim (1+t)^{-\frac{6-q}{q}}
          +\varepsilon \|\nabla^3 \sigma\|_{L^2}^2.\\
\end{aligned}
\end{equation}
In the same manner, it is easy to deduce
\begin{equation}\label{2315}
\int\nabla^2 (g(\varrho)(\nu |{\rm curl} B|^2)) \nabla^2 \sigma dx
\lesssim (1+t)^{-\frac{6-q}{q}}
          +\varepsilon \|\nabla^3 \sigma\|_{L^2}^2.
\end{equation}
Substituting \eqref{234}-\eqref{2315} into \eqref{233} and
applying the smallness of $\delta_0$ and $\varepsilon$,
it is easy to deduce
$$
\frac{d}{dt}\int (|\nabla^2 \varrho|^2+|\nabla^2 u|^2+|\nabla^2 \sigma|^2) dx
+\int (\mu |\nabla^3 u|^2+\kappa |\nabla^3 \sigma|^2) dx
\lesssim (1+t)^{-\frac{6-q}{q}}+\varepsilon\|\nabla^3 \varrho\|_{L^2}^2.
$$
Therefore, we complete the proof of the lemma.
\end{proof}

Furthermore, we establish the following differential inequality for the third order
spatial derivatives of solutions.

\begin{lemm}\label{lemma2.4}
Under all the assumptions in Theorem \ref{Theorem1.1}, then we have
\begin{equation}\label{241}
\frac{d}{dt}\|\nabla^3(\varrho, u, \sigma)\|_{L^2}^2
+(\mu \|\nabla^4 u\|_{L^2}^2+\kappa \|\nabla^4 \sigma\|_{L^2}^2)
\lesssim (1+t)^{-\frac{6-q}{q}}+\varepsilon\|\nabla^3 \varrho\|_{L^2}^2.
\end{equation}
\end{lemm}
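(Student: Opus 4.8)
The plan is to repeat the energy argument of Lemma \ref{lemma2.3} one derivative higher, so I begin from the identity \eqref{232} with $k=3$, namely
\begin{equation*}
\begin{aligned}
&\frac{1}{2}\frac{d}{dt}\int (|\nabla^3 \varrho|^2+|\nabla^3 u|^2+|\nabla^3 \sigma|^2)\,dx
+\int\big(\mu|\nabla^4 u|^2+(\mu+\lambda)|\nabla^3{\rm div}u|^2+\kappa|\nabla^4\sigma|^2\big)\,dx\\
&=\int\nabla^3 S_1\cdot\nabla^3\varrho\,dx+\int\nabla^3 S_2\cdot\nabla^3 u\,dx+\int\nabla^3 S_3\cdot\nabla^3\sigma\,dx,
\end{aligned}
\end{equation*}
and I estimate the three nonlinear integrals term by term, exactly mirroring \eqref{234}--\eqref{2315}. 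The guiding principle is to integrate by parts so that one derivative is thrown onto the highest-order factor, producing $\nabla^4 u$ or $\nabla^4\sigma$, then to distribute the remaining derivatives by the Leibniz rule and to close each product with H\"older, the Gagliardo--Nirenberg inequality of Lemma \ref{lemma2.1}, and the Cauchy inequality. Every fully nonlinear product is then, via the decay rates \eqref{Decay-Pu-Guo} and the magnetic estimate of Lemma \ref{lemma2.2}, a product of two squared first-order norms each bounded by $(1+t)^{-\frac{3}{q}+\frac{1}{2}}$, hence of order $(1+t)^{-\frac{6-q}{q}}$.

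The only terms requiring care are those in which the density reaches its top order, because \eqref{eq1}$_1$ carries no dissipation for $\varrho$. In $\int\nabla^3 S_1\cdot\nabla^3\varrho$ the transport contribution $-u\cdot\nabla\varrho$ produces the borderline factor $\int u\cdot\nabla^4\varrho\,\nabla^3\varrho\,dx$, which I symmetrize by integration by parts into $-\tfrac12\int({\rm div}u)|\nabla^3\varrho|^2\,dx\lesssim\|\nabla u\|_{L^\infty}\|\nabla^3\varrho\|_{L^2}^2$, a decaying quantity; the companion piece $-\varrho\,{\rm div}u$ yields $\int\varrho\,\nabla^3{\rm div}u\,\nabla^3\varrho\,dx$, which Cauchy's inequality bounds by $\varepsilon\|\nabla^4 u\|_{L^2}^2+C\|\varrho\|_{L^\infty}^2\|\nabla^3\varrho\|_{L^2}^2$. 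Likewise, the pressure terms $-E(\varrho,\sigma)\nabla\varrho$ and $-F(\varrho,\sigma)\nabla\sigma$ in $S_2$ generate, after integration by parts, a top factor $E(\varrho,\sigma)\nabla^3\varrho\cdot\nabla^4 u$ controlled by $\varepsilon\|\nabla^4 u\|_{L^2}^2+C\|E\|_{L^\infty}^2\|\nabla^3\varrho\|_{L^2}^2$. Using the $L^\infty$-smallness $\|\varrho\|_{L^\infty},\|E\|_{L^\infty}\lesssim\delta_0$ recorded in \eqref{2.6}, all these density contributions collapse into $\varepsilon\|\nabla^4 u\|_{L^2}^2+\varepsilon\|\nabla^3\varrho\|_{L^2}^2$, where the coefficient of $\|\nabla^3\varrho\|_{L^2}^2$ is as small as desired. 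This $\varepsilon\|\nabla^3\varrho\|_{L^2}^2$ is precisely the term that cannot be absorbed at this stage and is therefore kept on the right-hand side; it is ultimately handled by the density-dissipation estimate in the spirit of \eqref{1.9}--\eqref{1.10}.

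For the genuinely viscous nonlinearities I proceed as in \eqref{236} and \eqref{2312}: the terms $-h(\varrho)[\mu\Delta u+(\mu+\lambda)\nabla{\rm div}u]$ and $-\kappa\,h(\varrho)\Delta\sigma$, after one integration by parts, are bounded by $\delta_0(\|\nabla^4 u\|_{L^2}^2+\|\nabla^4\sigma\|_{L^2}^2)$ thanks to $|h(\varrho)|\lesssim|\varrho|$ and the smallness \eqref{smallness-condition1}; the magnetic nonlinearities $g(\varrho)[B\cdot\nabla B-\tfrac12\nabla(|B|^2)]$ and $g(\varrho)(\nu|{\rm curl}B|^2)$ are treated as in \eqref{239}--\eqref{2310} and \eqref{2315}, where the decay of $\|\nabla^j B\|$ supplied by Lemma \ref{lemma2.2} and \eqref{Decay-Pu-Guo} again yields the rate $(1+t)^{-\frac{6-q}{q}}$. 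Finally I substitute all the estimates into the $k=3$ identity, choose $\varepsilon$ and $\delta_0$ small enough to absorb the $\varepsilon\|\nabla^4 u\|_{L^2}^2$, $\varepsilon\|\nabla^4\sigma\|_{L^2}^2$, $\delta_0\|\nabla^4 u\|_{L^2}^2$ and $\delta_0\|\nabla^4\sigma\|_{L^2}^2$ into the dissipation on the left, and discard the nonnegative term $(\mu+\lambda)\|\nabla^3{\rm div}u\|_{L^2}^2$, arriving at \eqref{241}. The main obstacle is exactly the bookkeeping of the top-order density factors described above: since no $\nabla^4\varrho$ dissipation is available, one must symmetrize the transport term and lean on the $L^\infty$-smallness of $\varrho$, $E$ and $F$ to push the surviving $\|\nabla^3\varrho\|_{L^2}^2$ to the right-hand side with an arbitrarily small constant.
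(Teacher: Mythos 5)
Your proposal is correct and follows essentially the same route as the paper: the same $k=3$ energy identity, integration by parts to load the top derivative onto $\nabla^4 u$ or $\nabla^4\sigma$, the symmetrization of $\int u\cdot\nabla^4\varrho\,\nabla^3\varrho\,dx$ into $-\tfrac12\int({\rm div}u)|\nabla^3\varrho|^2dx$, Cauchy plus the $L^\infty$-smallness of $\varrho$, $E$, $F$ for the remaining top-order density terms, and the decay rates \eqref{Decay-Pu-Guo} for the genuinely nonlinear products. The only cosmetic difference is that for the pressure terms the paper bounds $\|\nabla(\varrho,\sigma)\|_{H^1}^2\|\nabla^3(\varrho,\sigma)\|_{L^2}^2$ directly by $(1+t)^{-\frac{6-q}{q}}$ via \eqref{Decay-Pu-Guo}, whereas you keep an $\varepsilon\|\nabla^3\varrho\|_{L^2}^2$ remainder — both land within the stated right-hand side of \eqref{241}.
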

\begin{proof}
Taking $k=3$ in \eqref{232}, it is easy to see that
\begin{equation}\label{242}
\begin{aligned}
&\frac{1}{2}\frac{d}{dt}\int (|\nabla^3 \varrho|^2+|\nabla^3 u|^2+|\nabla^3 \sigma|^2) dx
+\int (\mu |\nabla^4 u|^2+(\mu+\lambda)|\nabla^3 {\rm div}u|^2+\nu |\nabla^4 \sigma|^2) dx\\
&=\!\int \!\nabla^3 (-\varrho {\rm div}u-u\cdot \nabla \varrho) \nabla^3 \varrho dx
  +\!\int\! \nabla^3 (-u\cdot \nabla u-
                   h(\varrho)[\mu\Delta u+(\mu+\lambda)\nabla {\rm div}u]) \nabla^3 u dx\\
&\quad +\int \nabla^3 (-E(\varrho, \sigma)\nabla \varrho-F(\varrho, \sigma)\nabla \sigma
             +g(\varrho)[B\cdot \nabla B-\frac{1}{2}\nabla(|B|^2)]) \nabla^3 u dx\\
&\quad +\!\int\! \nabla^3 (-u\cdot \nabla \sigma- h(\varrho)(\kappa \Delta \sigma)
       -G(\varrho, \sigma){\rm div}u
      +g(\varrho)\left[2\mu|D(u)|^2+\lambda({\rm div}u)^2\right]) \nabla^3 \sigma dx\\
&\quad +\int\nabla^3 (g(\varrho)(\nu |{\rm curl} B|^2)) \nabla^3 \sigma dx.
\end{aligned}
\end{equation}
By virtue of decay rates \eqref{Decay-Pu-Guo}, \eqref{smallness-condition1},
Holder, Sobolev and Cauchy inequalities, we obtain
\begin{equation}\label{243}
\begin{aligned}
&\int \nabla^3 (-\varrho {\rm div}u)\nabla^3 \varrho dx\\
&\lesssim  \|\varrho\|_{L^\infty}\|\nabla^4 u\|_{L^2}\|\nabla^3 \varrho\|_{L^2}
           +\|\nabla \varrho\|_{L^3} \|\nabla^3 u\|_{L^6}\|\nabla^3 \varrho\|_{L^2}\\
&\quad \   +\|\nabla^2 \varrho\|_{L^3} \|\nabla^2 u\|_{L^6}\|\nabla^3 \varrho\|_{L^2}
           +\|\nabla u\|_{L^\infty} \|\nabla^3 \varrho\|_{L^2}^2\\
&\lesssim \|\nabla^2 \varrho\|_{H^1}^2 \|\nabla^3 u\|_{L^2}^2
           +(\varepsilon+\delta_0)(\|\nabla^3 \varrho\|_{L^2}^2+\|\nabla^4 u\|_{L^2}^2)\\
&\lesssim (1+t)^{-\frac{3}{q}+\frac{1}{2}}(1+t)^{-\frac{3}{q}+\frac{1}{2}}
           +(\varepsilon+\delta_0)(\|\nabla^3 \varrho\|_{L^2}^2+\|\nabla^4 u\|_{L^2}^2)\\
&\lesssim (1+t)^{-\frac{6-q}{q}}+(\varepsilon+\delta_0)(\|\nabla^3 \varrho\|_{L^2}^2+\|\nabla^4 u\|_{L^2}^2).
\end{aligned}
\end{equation}
Similarly, it is easy to deduce
\begin{equation}\label{244}
\int \nabla^3 (-u \cdot \nabla \varrho)\nabla^3 \varrho dx
\lesssim (1+t)^{-\frac{6-q}{q}}+(\varepsilon+\delta_0)\|\nabla^3 \varrho\|_{L^2}^2.
\end{equation}
With the help of decay rates \eqref{Decay-Pu-Guo}, \eqref{smallness-condition1},
Holder, Sobolev and Cauchy inequalities, we get
\begin{equation}\label{245}
\begin{aligned}
&\int \nabla^3 (-u\cdot \nabla u)\nabla^3 u \ dx\\
&\lesssim (\|\nabla u\|_{L^3}\|\nabla^2 u\|_{L^6}+\|u\|_{L^3}\|\nabla^3 u\|_{L^6})\|\nabla^4 u\|_{L^2}\\
&\lesssim \|\nabla u\|_{H^1}^2\|\nabla^3 u\|_{L^2}^2+(\varepsilon+\delta_0)\|\nabla^4 u\|_{L^2}^2\\
&\lesssim (1+t)^{-\frac{3}{q}+\frac{1}{2}}(1+t)^{-\frac{3}{q}+\frac{1}{2}}
          +(\varepsilon+\delta_0)\|\nabla^4 u\|_{L^2}^2\\
&\lesssim (1+t)^{-\frac{6-q}{q}}+(\varepsilon+\delta_0)\|\nabla^4 u\|_{L^2}^2.
\end{aligned}
\end{equation}
By virtue of the integration by part and applying decay rates \eqref{Decay-Pu-Guo},
\eqref{smallness-condition1}-\eqref{2.7}, Holder and Sobolev inequalities,  we deduce
\begin{equation}\label{246}
\begin{aligned}
&\int \nabla^3 (-h(\varrho)[\mu\Delta u+(\mu+\lambda)\nabla {\rm div}u])\nabla^3 u dx\\
&\approx \int \nabla^2 (h(\varrho) \nabla^2 u)\nabla^4 u \ dx\\
&\lesssim (\|\nabla \varrho\|_{L^6}^2\|\nabla^2 u\|_{L^6}
            +\|\nabla^2 \varrho\|_{L^3}\|\nabla^2 u\|_{L^6})\|\nabla^4 u\|_{L^2}\\
&\quad \ +(\|\nabla \varrho\|_{L^3}\|\nabla^3 u\|_{L^6}
               +\|h(\varrho)\|_{L^\infty}\|\nabla^4 u\|_{L^2})\|\nabla^4 u\|_{L^2}\\
&\lesssim \|\nabla^2 \varrho\|_{H^1}^2\|\nabla^3 u\|_{L^2}^2
          +(\delta_0+\varepsilon)\|\nabla^4 u\|_{L^2}^2\\
&\lesssim (1+t)^{-\frac{6-q}{q}}+(\varepsilon+\delta_0)\|\nabla^4 u\|_{L^2}^2.
\end{aligned}
\end{equation}
The application of decay rates \eqref{Decay-Pu-Guo},
\eqref{2.6}, \eqref{2.7}, Holder and Sobolev inequalities yields directly
\begin{equation}\label{247}
\begin{aligned}
&-\int \nabla^3 [-E(\varrho, \sigma) \nabla \varrho]\nabla^3 udx\\
&=\int [\nabla^2 E(\varrho, \sigma)\nabla \varrho+2\nabla E(\varrho, \sigma)\nabla^2 \varrho
        +E(\varrho, \sigma)\nabla^3 \varrho]\nabla^4 u  dx\\
&\lesssim (\|\nabla \varrho+\nabla \sigma\|_{L^6}^2\|\nabla \varrho\|_{L^6}
          +\|\nabla \varrho\|_{L^3}\|\nabla^2 \varrho+\nabla^2 \sigma\|_{L^6})\|\nabla^4 u\|_{L^2}\\
&\quad    +(\|\nabla \varrho+\nabla \sigma\|_{L^3}\|\nabla^2 \varrho\|_{L^6}
          +\|E(\varrho, \sigma)\|_{L^\infty}\|\nabla^3 \varrho\|_{L^2})\|\nabla^4 u\|_{L^2}\\
&\lesssim \|\nabla^2(\varrho, \sigma)\|_{L^2}^6
  +\|\nabla(\varrho, \sigma)\|_{L^3}^2\|\nabla^3(\varrho, \sigma)\|_{L^2}^2
          +\varepsilon\|\nabla^4 u\|_{L^2}^2\\
&\quad +\|\nabla (\varrho, \sigma)\|_{H^1}^2\|\nabla^3 \varrho\|_{L^2}^2\\
&\lesssim \|\nabla^2(\varrho, \sigma)\|_{L^2}^6+\|\nabla(\varrho, \sigma)\|_{H^1}^2\|\nabla^3(\varrho,\sigma)\|_{L^2}^2
          +\varepsilon\|\nabla^4 u\|_{L^2}^2\\
&\lesssim (1+t)^{-\frac{6-q}{q}}+\varepsilon\|\nabla^4 u\|_{L^2}^2.
\end{aligned}
\end{equation}
In the same manner, it is easy to deduce
\begin{equation}\label{248}
\int \nabla^3 [-F(\varrho, \sigma) \nabla \sigma]\nabla^3 udx
\lesssim (1+t)^{-\frac{6-q}{q}}+\varepsilon\|\nabla^4 u\|_{L^2}^2.
\end{equation}
Integrating by part and applying \eqref{Decay-Pu-Guo},
\eqref{2.6}, \eqref{2.7}, Holder and Sobolev inequalities, we get
\begin{equation}\label{249}
\begin{aligned}
&\int \nabla^3 (g(\varrho)(B \cdot \nabla B))\nabla^3 u dx\\
&=-\int (\nabla^2 g(\varrho)B\nabla B+2\nabla g(\varrho)\nabla(B\nabla B)+g(\varrho)\nabla^2(B\nabla B))\nabla^4 u dx\\
&\lesssim (\||\nabla^2 \varrho|+|\nabla \varrho|^2\|_{L^6}\|B\|_{L^6}\|\nabla B\|_{L^6}
     +\|\nabla \varrho\|_{L^6}\|\nabla B\|_{L^6}^2)\|\nabla^4 u\|_{L^2}\\
&\quad    \ +(\|\nabla \varrho\|_{L^6}\|B\|_{L^6}\|\nabla^2 B\|_{L^6}
            +\|g(\varrho)\|_{L^\infty}\|\nabla B\|_{L^6}\|\nabla^2 B\|_{L^3})\|\nabla^4 u\|_{L^2}\\
&\quad   \  +\|g(\varrho)\|_{L^\infty}\|B\|_{L^\infty}\|\nabla^3 B\|_{L^2}\|\nabla^4 u\|_{L^2}\\
&\lesssim \|\nabla B\|_{L^2}^2\|\nabla^2 B\|_{L^2}^2
     +\|\nabla^2 B\|_{L^2}^2\|\nabla^2 B\|_{H^1}^2
     +\varepsilon \|\nabla^4 u\|_{L^2}^2\\
&\lesssim \|\nabla B\|_{H^1}^2\|\nabla^2 B\|_{H^1}^2+\varepsilon \|\nabla^4 u\|_{L^2}^2\\
&\lesssim (1+t)^{-\frac{3}{q}+\frac{1}{2}}(1+t)^{-\frac{3}{q}+\frac{1}{2}}
          +\varepsilon\|\nabla^4 u\|_{L^2}^2\\
&\lesssim (1+t)^{-\frac{6-q}{q}}+\varepsilon\|\nabla^4 u\|_{L^2}^2.
\end{aligned}
\end{equation}
Similarly, it is easy to deduce
\begin{equation}\label{2410}
\frac{1}{2}\int \nabla^3 (-g(\varrho)\nabla (|\nabla B|^2))\nabla^3 u dx
\lesssim (1+t)^{-\frac{6-q}{q}}+\varepsilon\|\nabla^4 u\|_{L^2}^2.
\end{equation}
Following the idea as \eqref{245} and \eqref{246} respectively, it is easy to deduce
\begin{equation}\label{2411}
\begin{aligned}
\int \nabla^3(-u\cdot \nabla \sigma)\nabla^3 \sigma dx
&\lesssim \|\nabla (u, \sigma)\|_{H^1}^2\|\nabla^3 (u, \sigma)\|_{L^2}^2
          +(\varepsilon+\delta_0)\|\nabla^4 \sigma\|_{L^2}^2\\
&\lesssim (1+t)^{-\frac{6-q}{q}}
           +(\varepsilon+\delta_0)\|\nabla^4 \sigma\|_{L^2}^2,
\end{aligned}
\end{equation}
and
\begin{equation}\label{2412}
\begin{aligned}
&\kappa \int \nabla^3(-h(\varrho) \nabla^2 \sigma)\nabla^3 \sigma \ dx\\
&\lesssim \|\nabla^2 \varrho\|_{H^1}^2\|\nabla^3 \sigma\|_{L^2}^2
       +(\varepsilon+\delta)\|\nabla^4 \sigma\|_{L^2}^2\\
&\lesssim (1+t)^{-\frac{6-q}{q}}
           +(\varepsilon+\delta)\|\nabla^4 \sigma\|_{L^2}^2.
\end{aligned}
\end{equation}
Integrating by part and applying decay rates \eqref{Decay-Pu-Guo},
\eqref{2.6}, \eqref{2.7}, Holder and Sobolev inequalities, it arrives at
\begin{equation}\label{2413}
\begin{aligned}
&\int \nabla^3[-G(\varrho,\sigma) {\rm div} u]\nabla^3 \sigma dx\\
&=\int [\nabla^2 G(\varrho, \sigma){\rm div} u+2\nabla G(\varrho, \sigma)\nabla {\rm div} u
   +G(\varrho, \sigma)\nabla^2{\rm div} u]\nabla^4  \sigma dx\\
&\lesssim [\|\nabla \varrho+\nabla \sigma\|_{L^6}^2\|\nabla u\|_{L^6}
          +\|\nabla^2 \varrho+\nabla^2 \sigma\|_{L^3}\|\nabla u\|_{L^6}\\
&\quad \quad  +\|\nabla \varrho+\nabla \sigma\|_{L^3}\|\nabla^2 u\|_{L^6}
          +\|G(\varrho, \sigma)\|_{L^\infty}\|\nabla^3 u\|_{L^2}]\|\nabla^4 \sigma\|_{L^2}\\
&\lesssim \|\nabla^2(\varrho,u, \sigma)\|_{L^2}^6
          +\|\nabla (\varrho, \sigma)\|_{H^2}^2\|\nabla^2 u\|_{H^1}^2
          +\varepsilon\|\nabla^4 \sigma\|_{L^2}^2\\
&\quad    +\|\nabla (\varrho, \sigma)\|_{H^1}^2\|\nabla^3 u\|_{L^2}^2\\
&\lesssim \|\nabla^2(\varrho, u, \sigma)\|_{L^2}^6
          +\|\nabla(\varrho, \sigma)\|_{H^2}^2\|\nabla^2 u\|_{H^1}^2
          +\varepsilon\|\nabla^4 \sigma\|_{L^2}^2\\
&\lesssim (1+t)^{-\frac{6-q}{q}}+\varepsilon\|\nabla^4 \sigma\|_{L^2}^2.
\end{aligned}
\end{equation}
Integrating by part and applying decay rates \eqref{Decay-Pu-Guo},
\eqref{2.6}, \eqref{2.7}, Holder and Sobolev inequalities, we get
\begin{equation}\label{2414}
\begin{aligned}
&\int \nabla^3 (g(\varrho)\left[2\mu|D(u)|^2+\lambda({\rm div}u)^2\right])\nabla^3 \sigma dx\\
&\approx \int \nabla^2(g(\varrho) |\nabla u|^2)\nabla^4 \sigma dx\\
&=\int (\nabla^2 g(\varrho) |\nabla u|^2+4\nabla g(\varrho)\nabla u\nabla^2 u)\nabla^4 \sigma dx\\
&\quad +\int(2g(\varrho)\nabla^2 u\nabla^2 u+2g(\varrho)\nabla u \nabla^3 u)\nabla^4 \sigma dx\\
&\lesssim (\|\nabla \varrho \|_{L^\infty}\|\nabla \varrho\|_{L^6}\|\nabla u\|_{L^6}^2
           +\|\nabla^2 \varrho\|_{L^6}\|\nabla u\|_{L^6}^2)\|\nabla^4 \sigma\|_{L^2}\\
&\quad \ +(\|\nabla \varrho\|_{L^6}\|\nabla u\|_{L^6}\|\nabla^2 u\|_{L^6}
           +\|\nabla^2 u\|_{L^3}\|\nabla^2 u\|_{L^6})\|\nabla^4 \sigma\|_{L^2}\\
&\quad ~  +\|\nabla u\|_{L^\infty}\|\nabla^3 u\|_{L^2}\|\nabla^4 \sigma\|_{L^2}\\
&\lesssim \|\nabla^2 \varrho\|_{H^1}^2\|\nabla^2 u\|_{L^2}^2
           +\|\nabla^2 u\|_{H^1}^2\|\nabla^3 u\|_{L^2}^2
           +\varepsilon \|\nabla^4 \sigma\|_{L^2}^2\\
&\lesssim \|\nabla^2 (\varrho, u)\|_{H^1}^2\|\nabla^2 u\|_{H^1}^2
          +\varepsilon \|\nabla^4 \sigma\|_{L^2}^2\\
&\lesssim (1+t)^{-\frac{6-q}{q}}+\varepsilon\|\nabla^4 \sigma\|_{L^2}^2.
\end{aligned}
\end{equation}
In the same manner, it is easy to deduce
\begin{equation}\label{2415}
\begin{aligned}
\int\nabla^3 (g(\varrho)(\nu |{\rm curl} B|^2)) \nabla^3 \sigma dx
&\lesssim \|\nabla^2 (\varrho, B)\|_{H^1}^2\|\nabla^2 B\|_{H^1}^2
           +\varepsilon \|\nabla^4 \sigma\|_{L^2}^2\\
&\lesssim (1+t)^{-\frac{6-q}{q}}+\varepsilon\|\nabla^4 \sigma\|_{L^2}^2.
\end{aligned}
\end{equation}
Substituting \eqref{243}-\eqref{2415} into \eqref{242} and
applying the smallness of $\delta_0$ and $\varepsilon$,
it is easy to deduce
$$
\frac{d}{dt}\int (|\nabla^3 \varrho|^2+|\nabla^3 u|^2+|\nabla^3 \sigma|^2) dx
+\int (\mu |\nabla^4 u|^2+\kappa |\nabla^4 \sigma|^2) dx
\lesssim (1+t)^{-\frac{6-q}{q}}+\varepsilon\|\nabla^3 \varrho\|_{L^2}^2.
$$
Therefore, we complete the proof of the lemma.
\end{proof}

Furthermore,, we establish the dissipative estimates for the density.

\begin{lemm}\label{lemma2.5}
Under all the assumptions in Theorem \ref{Theorem1.1}, then we have
\begin{equation}\label{251}
\frac{d}{dt}\int \nabla^2 u\cdot \nabla^3 \varrho dx+\int|\nabla^3 \varrho|^2 dx
\le C_1 (\|\nabla^3 \sigma\|_{L^2}^2+\|\nabla^3 u\|_{H^1}^2)+C_1(1+t)^{-\frac{6-q}{q}}.
\end{equation}
\end{lemm}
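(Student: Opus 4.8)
The plan is to exploit the pressure--gradient term $\nabla\varrho$ in the momentum equation $\eqref{eq1}_2$ to recover a dissipative estimate for the density, which the (hyperbolic) continuity equation $\eqref{eq1}_1$ cannot supply on its own. Accordingly I would differentiate the interaction functional in time and substitute both evolution equations,
\begin{equation*}
\frac{d}{dt}\int\nabla^2 u\cdot\nabla^3\varrho\,dx
=\int\nabla^2 u_t\cdot\nabla^3\varrho\,dx+\int\nabla^2 u\cdot\nabla^3\varrho_t\,dx.
\end{equation*}
Replacing $u_t$ by $\mu\Delta u+(\mu+\lambda)\nabla{\rm div}u-\nabla\varrho-\nabla\sigma+S_2$ from $\eqref{eq1}_2$, the pressure term yields $-\int|\nabla^3\varrho|^2\,dx$ (recall $P_\rho(1,1)=1$), which is the sole source of density dissipation and is exactly the quantity transferred to the left-hand side of $\eqref{251}$.

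It then remains to show that every other contribution is admissible. The fourth--order viscous terms $\mu\nabla^2\Delta u$ and $(\mu+\lambda)\nabla^2\nabla{\rm div}u$ are paired with $\nabla^3\varrho$ by Cauchy--Schwarz and Young's inequality, giving $\eta\|\nabla^3\varrho\|_{L^2}^2+C_\eta\|\nabla^4 u\|_{L^2}^2$, where $\|\nabla^4 u\|_{L^2}^2\subset\|\nabla^3 u\|_{H^1}^2$; the term $-\nabla\sigma$ produces $-\int\nabla^3\sigma\cdot\nabla^3\varrho\le\eta\|\nabla^3\varrho\|_{L^2}^2+C\|\nabla^3\sigma\|_{L^2}^2$. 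For the second integral I use $\varrho_t=-{\rm div}u+S_1$ from $\eqref{eq1}_1$: the linear part $-\int\nabla^2 u\cdot\nabla^3{\rm div}u\,dx$ integrates by parts to a multiple of $\|\nabla^3 u\|_{L^2}^2\le\|\nabla^3 u\|_{H^1}^2$, while $\int\nabla^2 u\cdot\nabla^3 S_1\,dx$ is rewritten as $-\int\nabla^3 u:\nabla^2 S_1\,dx$.

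The two genuinely nonlinear contributions $\int\nabla^2 S_2\cdot\nabla^3\varrho\,dx$ and $-\int\nabla^3 u:\nabla^2 S_1\,dx$ are then estimated precisely as in the treatment of $\eqref{234}$--$\eqref{2415}$: using the Gagliardo--Nirenberg inequality of Lemma \ref{lemma2.1}, the pointwise bounds $\eqref{2.6}$--$\eqref{2.7}$ on $h,g,E,F,G$, the a priori smallness $\eqref{smallness-condition1}$, and the decay rates $\eqref{Decay-Pu-Guo}$. Each quadratic nonlinearity factors into two quantities decaying like $(1+t)^{-\frac32(\frac1q-\frac12)-1}$ at second order, so their products contribute the time-decay term $(1+t)^{-\frac{6-q}{q}}$, together with small multiples of $\|\nabla^3\varrho\|_{L^2}^2$ and $\|\nabla^4 u\|_{L^2}^2$ carrying the factor $\varepsilon+\delta_0$.

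\textbf{The main obstacle} is conceptual rather than computational: the density carries no dissipation of its own, so the whole estimate rests on the coupling through $\nabla\varrho$, and the functional $\int\nabla^2 u\cdot\nabla^3\varrho$ is sign--indefinite, so only its time derivative is controlled. The delicate bookkeeping is to charge each error term either to a small multiple of $\|\nabla^3\varrho\|_{L^2}^2$, to be absorbed into the main dissipative term by taking $\varepsilon$ and $\delta_0$ small, or to the admissible right-hand side $\|\nabla^3 u\|_{H^1}^2+\|\nabla^3\sigma\|_{L^2}^2+(1+t)^{-\frac{6-q}{q}}$; in particular the fourth-order viscous terms must be charged to $\|\nabla^4 u\|_{L^2}^2\subset\|\nabla^3 u\|_{H^1}^2$ rather than to the density. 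After collecting all terms and choosing the parameters small, the coefficient of $\|\nabla^3\varrho\|_{L^2}^2$ remains positive, and relabeling constants yields $\eqref{251}$.
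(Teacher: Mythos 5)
Your proposal is correct and follows essentially the same route as the paper: extract the $-\|\nabla^3\varrho\|_{L^2}^2$ dissipation from the pressure term in the momentum equation $\eqref{eq1}_2$, rewrite $\int \nabla^2 u_t\cdot\nabla^3\varrho\,dx$ via the time derivative of the functional and the continuity equation for $\varrho_t$ (after an integration by parts), and charge the viscous, temperature and nonlinear ($S_1$, $S_2$) contributions to $\|\nabla^3 u\|_{H^1}^2$, $\|\nabla^3\sigma\|_{L^2}^2$, small absorbable multiples of $\|\nabla^3\varrho\|_{L^2}^2$, and the decay term $(1+t)^{-\frac{6-q}{q}}$ via \eqref{Decay-Pu-Guo}. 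The only cosmetic difference is in the $S_1$ term, where the paper integrates by parts once more (pairing $\nabla^3{\rm div}u$ with $\nabla S_1$ rather than $\nabla^3 u$ with $\nabla^2 S_1$), which changes nothing of substance since your $u\nabla^3\varrho$ contribution is still absorbable by the smallness $\|u\|_{L^\infty}\lesssim\delta_0$.
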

\begin{proof}
Taking $\nabla^2-$th spatial derivatives on both hand sidse of $\eqref{eq1}_2$,
multiplying by $\nabla^2 \varrho$ and integrating over $\mathbb{R}^3$, then we get
\begin{equation}\label{252}
\int (\nabla^2 u_t \cdot \nabla^3 \varrho+|\nabla^3 \varrho|^2) dx
=\int [\mu \Delta \nabla^2 u+(\mu+\lambda)\nabla^3{\rm div}u-\nabla^3 \sigma
       +\nabla^2 S_2]\nabla^3 \varrho dx.
\end{equation}
In order to deal with the term $\int \nabla^2 u_t \cdot \nabla^3 \varrho dx$,
we turn to time derivatives of velocity
to the density and apply the transport equation $\eqref{eq1}_1$. More precisely, we have
\begin{equation}\label{253}
\begin{aligned}
\int \nabla^2 u_t \cdot \nabla^3 \varrho dx
&=\frac{d}{dt}\int \nabla^2 u \cdot \nabla^3 \varrho dx-\int \nabla^2 u \cdot \nabla^3 \varrho_t dx\\
&=\frac{d}{dt}\int \nabla^2 u \cdot \nabla^3 \varrho dx
            +\int \nabla^2 {\rm div} u \cdot \nabla^2 \varrho_t dx\\
&=\frac{d}{dt}\int \nabla^2 u \cdot \nabla^3 \varrho dx
   -\int \nabla^2 {\rm div} u \cdot \nabla^2({\rm div}u+\varrho {\rm div}u+u\nabla \varrho)dx\\
\end{aligned}
\end{equation}
Substituting \eqref{253} into \eqref{252}, it is easy to deduce
\begin{equation}\label{254}
\begin{aligned}
&\frac{d}{dt}\int \nabla^2 u \cdot \nabla^3 \varrho dx+\int |\nabla^3 \varrho|^2 dx\\
&=\int |\nabla^2 {\rm div} u|^2 dx+\int \nabla^2 {\rm div} u \cdot \nabla^2(\varrho {\rm div}u+u\nabla \varrho)dx\\
&\quad +\int \nabla^2 S_2 \cdot \nabla^3 \varrho dx
       +\int [\mu \Delta \nabla^2 u+(\mu+\lambda)\nabla^3{\rm div}u-\nabla \sigma]
       \nabla^3 \varrho dx.
\end{aligned}
\end{equation}
Integrating by part and applying decay rates \eqref{Decay-Pu-Guo},
Holder and Sobolev inequalities, we obtain
\begin{equation}\label{255}
\begin{aligned}
&\int \nabla^2 {\rm div} u \cdot \nabla^2(\varrho {\rm div}u+u\nabla \varrho)dx\\
&=-\int \nabla^3{\rm div}u \cdot \nabla(\varrho {\rm div}u+u\nabla \varrho)dx\\
&\lesssim \|\nabla^4 u\|_{L^2}(\|\nabla \varrho\|_{L^3}\|\nabla u\|_{L^6}
         +\|\varrho\|_{L^6}\|\nabla^2 u\|_{L^3}+\|u\|_{L^6}\|\nabla^2 \varrho\|_{L^3})\\
&\lesssim \|\nabla \varrho\|_{H^1}^2\|\nabla^2 u\|_{L^2}^2
         +\|\nabla \varrho\|_{L^2}^2\|\nabla^2 u\|_{H^1}^2
         +\|\nabla u\|_{L^2}^2\|\nabla^2 \varrho\|_{H^1}^2
         +\varepsilon \|\nabla^4 u\|_{L^2}^2\\
&\lesssim (1+t)^{-\frac{6-q}{q}}+\varepsilon \|\nabla^4 u\|_{L^2}^2.\\
\end{aligned}
\end{equation}
Following the idea as in Lemma \ref{lemma2.4}, we deduce immediately
\begin{equation}\label{256}
\int \nabla^2 S_2 \cdot \nabla^3 \varrho dx
\lesssim (1+t)^{-\frac{6-q}{q}}+\varepsilon\|\nabla^3 \varrho\|_{L^2}^2.
\end{equation}
On the other hand, it is easy to see that
\begin{equation}\label{257}
\int [\mu \Delta \nabla^2 u+(\mu+\lambda)\nabla^3{\rm div}u-\nabla^3 \sigma]\nabla^3 \varrho dx
\lesssim \|\nabla^3 \sigma\|_{L^2}^2+\|\nabla^4 u\|_{L^2}^2
         +\varepsilon\|\nabla^3 \varrho\|_{L^2}^2.
\end{equation}
Plugging \eqref{255}-\eqref{257} into \eqref{254}, we complete the proof of the lemma.
\end{proof}

The optimal decay rates for the second order spatial derivatives of global
classical solutions are stated in the following lemma.

\begin{lemm}\label{lemma2.6}
Under all the assumptions in Theorem \ref{Theorem1.1}, then the global classical solution
$(\varrho, u, \sigma)$ of Cauchy problem \eqref{eq1}-\eqref{eq4} has the
decay rates
\begin{equation}\label{261}
\|\nabla^2 \varrho (t)\|_{H^{1}}^2+\|\nabla^2 u (t)\|_{H^{1}}^2
+\|\nabla^2 \sigma (t)\|_{H^{1}}^2 \le
C(1+t)^{-3\left(\frac{1}{q}-\frac{1}{2}\right)-2}
\end{equation}
for all $t \ge T^*( T^*~\text{is~a~positive~constant~defined~below})$.
\end{lemm}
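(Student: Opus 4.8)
The plan is to assemble the three differential inequalities of Lemmas \ref{lemma2.3}--\ref{lemma2.5} into a single Lyapunov inequality of the form \eqref{1.10} and then run the Fourier splitting argument exactly as in the proof of Lemma \ref{lemma2.2}. First I would add the inequalities of Lemma \ref{lemma2.3} and Lemma \ref{lemma2.4}; since the two dissipation contributions combine as $\mu(\|\nabla^3 u\|_{L^2}^2+\|\nabla^4 u\|_{L^2}^2)=\mu\|\nabla^3 u\|_{H^1}^2$ and similarly for $\sigma$, this yields
$$
\frac{d}{dt}\|\nabla^2(\varrho,u,\sigma)\|_{H^1}^2
+\mu\|\nabla^3 u\|_{H^1}^2+\kappa\|\nabla^3\sigma\|_{H^1}^2
\lesssim \varepsilon\|\nabla^3\varrho\|_{L^2}^2+(1+t)^{-\frac{6-q}{q}}.
$$
This dissipates $u$ and $\sigma$ but carries no dissipation for $\varrho$, so I would add a small multiple $\eta$ of the density estimate \eqref{251}. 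Choosing $\eta$ small enough that $\eta C_1(\|\nabla^3\sigma\|_{L^2}^2+\|\nabla^3 u\|_{H^1}^2)$ is absorbed by $\mu\|\nabla^3 u\|_{H^1}^2+\kappa\|\nabla^3\sigma\|_{H^1}^2$, and then $\varepsilon$ small relative to $\eta$, one sees by Cauchy's inequality that the cross term $\eta\int\nabla^2 u\cdot\nabla^3\varrho\,dx$ is dominated by $\tfrac12\|\nabla^2(\varrho,u,\sigma)\|_{H^1}^2$, so that with
$$
\mathcal{E}_2^3(t):=\|\nabla^2(\varrho,u,\sigma)\|_{H^1}^2+\eta\int\nabla^2 u\cdot\nabla^3\varrho\,dx\approx \|\nabla^2(\varrho,u,\sigma)\|_{H^1}^2
$$
we obtain
$$
\frac{d}{dt}\mathcal{E}_2^3(t)+C_2\big(\|\nabla^3\varrho\|_{L^2}^2+\|\nabla^3 u\|_{H^1}^2+\|\nabla^3\sigma\|_{H^1}^2\big)\lesssim (1+t)^{-\frac{6-q}{q}}.
$$

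Next I would insert the Fourier splitting inequality of the type \eqref{FSM} term by term. For $u$ and $\sigma$ the full $H^1$ norm of the third derivative is available, so $\|\nabla^3\cdot\|_{L^2}^2$ and $\|\nabla^4\cdot\|_{L^2}^2$ control respectively $\tfrac{R}{1+t}\|\nabla^2\cdot\|_{L^2}^2$ and $\tfrac{R}{1+t}\|\nabla^3\cdot\|_{L^2}^2$, up to remainders of size $(\tfrac{R}{1+t})^2$ times lower-order norms. The main obstacle is the density: because the system is only hyperbolic in $\varrho$, there is no $\|\nabla^4\varrho\|_{L^2}^2$ to feed the splitting of the top energy level $\|\nabla^3\varrho\|_{L^2}^2$, and this is exactly why the conclusion is claimed only for $t\ge T^*$. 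I would split the single dissipation $C_2\|\nabla^3\varrho\|_{L^2}^2$ into two halves: one half enters \eqref{FSM} to dominate $\tfrac{R}{1+t}\|\nabla^2\varrho\|_{L^2}^2$, while the other half is retained and, using that $\tfrac{R}{1+t}\le 1$ once $t\ge T^*:=R-1$, satisfies $\tfrac{C_2}{2}\|\nabla^3\varrho\|_{L^2}^2\ge \tfrac{C_2 R}{2(1+t)}\|\nabla^3\varrho\|_{L^2}^2$. Together these give $C_2\|\nabla^3\varrho\|_{L^2}^2\ge \tfrac{C_2 R}{2(1+t)}\|\nabla^2\varrho\|_{H^1}^2-(\tfrac{R}{1+t})^2\|\nabla\varrho\|_{L^2}^2$; this is precisely the role of the $2\|\nabla^3\varrho\|_{L^2}^2$ appearing in the heuristic of the introduction.

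Collecting every term, I would arrive at
$$
\frac{d}{dt}\mathcal{E}_2^3(t)+\frac{C_3}{1+t}\mathcal{E}_2^3(t)\lesssim \frac{1}{(1+t)^2}\big(\|\nabla(\varrho,u,\sigma)\|_{L^2}^2+\|\nabla^2(u,\sigma)\|_{L^2}^2\big)+(1+t)^{-\frac{6-q}{q}},
$$
with $C_3$ proportional to $R$. The lower-order norms are controlled by the Pu--Guo rate \eqref{Decay-Pu-Guo} (case $k=1$), producing a bound $(1+t)^{-3(\frac1q-\frac12)-3}$, and by \eqref{Large-Small-Relation} the forcing $(1+t)^{-\frac{6-q}{q}}$ decays at least this fast; hence the whole right-hand side is $\lesssim(1+t)^{-3(\frac1q-\frac12)-3}$. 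Choosing $R$ (hence $C_3$ and $T^*$) large enough that $C_3>3(\frac1q-\frac12)+2$, I would multiply by the integrating factor $(1+t)^{C_3}$ and integrate over $[T^*,t]$; the integral yields the slower rate $(1+t)^{-3(\frac1q-\frac12)-2}$, which also dominates the data contribution $(1+t)^{-C_3}\mathcal{E}_2^3(T^*)$. The equivalence $\mathcal{E}_2^3\approx\|\nabla^2(\varrho,u,\sigma)\|_{H^1}^2$ then delivers \eqref{261}. The delicate point is throughout the density bookkeeping: one must keep enough of $\|\nabla^3\varrho\|_{L^2}^2$ both to absorb the $\varepsilon\|\nabla^3\varrho\|_{L^2}^2$ generated by the nonlinear estimates and to drive the Fourier splitting of the two $\varrho$-levels, which is what forces the threshold $t\ge T^*$.
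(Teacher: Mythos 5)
Your proposal is correct and follows essentially the same route as the paper's proof: the same Lyapunov functional built by adding Lemmas \ref{lemma2.3} and \ref{lemma2.4} to a small multiple of Lemma \ref{lemma2.5}, the same two-half treatment of the $\|\nabla^3 \varrho\|_{L^2}^2$ dissipation (one half feeding the Fourier splitting to control $\tfrac{R}{1+t}\|\nabla^2\varrho\|_{L^2}^2$, the other retained and converted via $\tfrac{R}{1+t}\le 1$ for $t\ge T^*=R-1$), the same equivalence $M_2^3(t)\approx\|\nabla^2(\varrho,u,\sigma)\|_{H^1}^2$, and the same integrating-factor conclusion. The only difference is constant bookkeeping --- you take $R$ large enough and integrate from $T^*$, whereas the paper tunes $R=\tfrac{2(3+q)C_5}{qC_3}$ so the integrating factor is exactly $(1+t)^{\frac{3+q}{q}}$ and integrates from $0$ --- which is immaterial (if anything, integrating from $T^*$ is the more careful choice, since \eqref{2612} only holds for $t\ge R-1$).
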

\begin{proof}
Adding \eqref{231} with \eqref{241}, it is easy to deduce
\begin{equation}\label{262}
\frac{d}{dt}\|\nabla^2(\varrho, u, \sigma)\|_{H^1}^2
+(\mu \|\nabla^3 u\|_{L^2}^2+\kappa \|\nabla^3 \sigma\|_{H^1}^2)
\le C_2 \delta_0\|\nabla^3 \varrho\|_{L^2}^2
    +C_2(1+t)^{-\frac{6-q}{q}}.
\end{equation}
Multiplying \eqref{251} by $\frac{2C_2 \delta_0}{C_1}$ and
adding with \eqref{262}, we obtain
\begin{equation}\label{263}
\frac{d}{dt} M_2^3(t)
+C_3(\|\nabla^3 \varrho\|_{L^2}^2+ \|\nabla^3 u\|_{H^1}^2+ \|\nabla^3 \sigma\|_{H^1}^2)
\le C_4(1+t)^{-\frac{6-q}{q}},
\end{equation}
where
$$
M_2^3(t)=\|\nabla^2(\varrho, u, \sigma)\|_{H^1}^2
+\frac{2 C_2 \delta_0}{C_1}\int \nabla^2 u\cdot \nabla^3 \varrho dx.
$$
Applying the Young inequality and the smallness of $\delta_0$,
we have the following equivalent relations
\begin{equation}\label{264}
C_5^{-1} \|\nabla^2(\varrho, u, \sigma)\|_{H^1}^2
\le M_2^3(t)
\le C_5 \|\nabla^2(\varrho, u, \sigma)\|_{H^1}^2.
\end{equation}
From the inequality \eqref{263}, it is easy to deduce
$$
\begin{aligned}
&\frac{d}{dt} M_2^3(t)
+\frac{C_3}{2}(\|\nabla^3 \varrho\|_{L^2}^2+\|\nabla^3 \varrho\|_{L^2}^2
               + \|\nabla^3 u\|_{H^1}^2+ \|\nabla^3 \sigma\|_{H^1}^2)\\
&\le \frac{d}{dt} M_2^3(t)
+\frac{C_3}{2}(2\|\nabla^3 \varrho\|_{L^2}^2
               + 2\|\nabla^3 u\|_{H^1}^2
               + 2\|\nabla^3 \sigma\|_{H^1}^2)\\
&\le C_4(1+t)^{-\frac{6-q}{q}},
\end{aligned}
$$
or equivalently,
\begin{equation}\label{265}
\frac{d}{dt} M_2^3(t)
+\frac{C_3}{2}(\|\nabla^3 \varrho\|_{L^2}^2+\|\nabla^3 \varrho\|_{L^2}^2
               + \|\nabla^3 u\|_{H^1}^2+ \|\nabla^3 \sigma\|_{H^1}^2)
\le C_4(1+t)^{-\frac{6-q}{q}}.
\end{equation}
Similar to \eqref{FSM}, we have
\begin{equation}\label{266}
\int |\nabla^3 u|^2 dx \ge \frac{R}{1+t}\int |\nabla^2 u|^2  dx
-\left(\frac{R}{1+t}\right)^2\int |\nabla u|^2dx,
\end{equation}
and
\begin{equation}\label{267}
\int |\nabla^4 u|^2 dx\ge \frac{R}{1+t}\int |\nabla^3 u|^2 dx
-\left(\frac{R}{1+t}\right)^2\int |\nabla^2 u|^2dx.
\end{equation}
Adding \eqref{266} with \eqref{267}, we obtain
\begin{equation}\label{268}
\|\nabla^3 u\|_{H^1}^2
\ge \frac{R}{1+t} \|\nabla^2 u\|_{H^1}^2
-\left(\frac{R}{1+t}\right)^2\|\nabla u\|_{H^1}^2.
\end{equation}
In the same manner, it is easy to deduce
\begin{equation}\label{269}
\|\nabla^3 \varrho\|_{L^2}^2
\ge \frac{R}{1+t}\|\nabla^2 \varrho\|_{L^2}^2
-\left(\frac{R}{1+t}\right)^2\|\nabla \varrho\|_{L^2}^2,
\end{equation}
and
\begin{equation}\label{2610}
\|\nabla^3 \sigma\|_{H^1}^2
\ge \frac{R}{1+t} \|\nabla^2 \sigma\|_{H^1}^2
-\left(\frac{R}{1+t}\right)^2\|\nabla \sigma\|_{H^1}^2.
\end{equation}
Combining \eqref{268}-\eqref{2610} with \eqref{265} and
applying the decay rates \eqref{Decay-Pu-Guo}, then we get
\begin{equation}\label{2611}
\begin{aligned}
&\frac{d}{dt} M_2^3(t)
+\frac{C_3}{2}\left[\frac{R}{1+t}(\|\nabla^2 \varrho\|_{L^2}^2+\|\nabla^2 u\|_{H^1}^2+\|\nabla^2 \sigma\|_{H^1}^2)+\|\nabla^3 \varrho\|_{L^2}^2 \right]\\
&\lesssim \left(\frac{R}{1+t}\right)^2
          (\|\nabla \varrho\|_{L^2}^2+\|\nabla u\|_{H^1}^2+\|\nabla \sigma\|_{H^1}^2)
          +(1+t)^{-\frac{6-q}{q}}\\
&\lesssim (1+t)^{-2}(1+t)^{-\left(\frac{3}{q}-\frac{1}{2}\right)}
          +(1+t)^{-\left(\frac{6}{q}-1\right)}\\
&\lesssim (1+t)^{-\left(\frac{3}{q}+\frac{3}{2}\right)}+(1+t)^{-\left(\frac{6}{q}-1\right)}\\
&\lesssim (1+t)^{-\left(\frac{3}{q}+\frac{3}{2}\right)},
\end{aligned}
\end{equation}
where we have used the fact \eqref{Large-Small-Relation}.
For some large time $t \ge R-1$, we have
$$
\frac{R}{1+t}\le 1,
$$
which implies
\begin{equation}\label{2612}
\frac{R}{1+t}\|\nabla^3 \varrho\|_{L^2}^2
\le \|\nabla^3 \varrho\|_{L^2}^2.
\end{equation}
Plugging \eqref{2612} into \eqref{2611}, it is easy to deduce
\begin{equation*}
\frac{d}{dt} M_2^3(t)
+\frac{C_3 R}{2(1+t)}(\|\nabla^2 \varrho\|_{H^1}^2+\|\nabla^2 u\|_{H^1}^2
                   +\|\nabla^2 \sigma\|_{H^1}^2)
\lesssim (1+t)^{-\left(\frac{3}{q}+\frac{3}{2}\right)},
\end{equation*}
which, together with the equivalent relation \eqref{264}, gives directly
\begin{equation}\label{2613}
\frac{d}{dt} M_2^3(t)+\frac{C_3 R}{2C_5(1+t)}M_2^3(t)
\lesssim (1+t)^{-\left(\frac{3}{q}+\frac{3}{2}\right)}.
\end{equation}
Choosing $R=\frac{2(3+q)C_5}{q C_3}$ in \eqref{2613}, then we have
\begin{equation}\label{2614}
\frac{d}{dt} M_2^3(t)+\frac{q+3}{q(1+t)}M_2^3(t)
\lesssim (1+t)^{-\left(\frac{3}{q}+\frac{3}{2}\right)}.
\end{equation}
Multiplying \eqref{2614} by $(1+t)^{\frac{3+q}{q}}$, it arrives at
\begin{equation}\label{2615}
\frac{d}{dt} \left[(1+t)^{\frac{3+q}{q}} M_2^3(t)\right] \lesssim (1+t)^{-\frac{1}{2}}.
\end{equation}
The integration of \eqref{2615} over $[0, t]$ yields
$$
M_2^3(t) \le (1+t)^{-\frac{3+q}{q}}(M_2^3(0)+C(1+t)^{\frac{1}{2}}),
$$
which, together with the equivalent relation \eqref{264}, gives
$$
\|\nabla^2 \varrho (t)\|_{H^{1}}^2+\|\nabla^2 u (t)\|_{H^{1}}^2
+\|\nabla^2 \sigma (t)\|_{H^{1}}^2
\le  C (1+t)^{-\left(\frac{3}{q}+\frac{1}{2}\right)},
$$
for all $t \ge T_*:=\frac{2(3+q)C_5}{q C_3}-1$.
Therefore, we complete the proof of the lemma.
\end{proof}

Finally, we establish optimal decay rates for the third order spatial
derivatives of magnetic field.

\begin{lemm}\label{lemma2.7}
Under all the assumptions in Theorem \ref{Theorem1.1}, then the magnetic field
has the following time decay rate for all $t \ge T^*$
\begin{equation}\label{271}
\|\nabla^3 B(t)\|_{L^2}^2\le C(1+t)^{-3\left(\frac{1}{q}-\frac{1}{2}\right)-3}.
\end{equation}
\end{lemm}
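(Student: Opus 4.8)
The plan is to rerun the Fourier splitting argument of Lemma~\ref{lemma2.2} on $\nabla^3 B$, but now feeding in the sharpened decay rates from Lemmas~\ref{lemma2.2} and \ref{lemma2.6} in place of the slower rates \eqref{Decay-Pu-Guo}. The starting point is the differential inequality \eqref{third-order}, which was derived from $\eqref{eq1}_4$ alone and hence is available for all $t$:
$$
\frac{d}{dt}\|\nabla^3 B\|_{L^2}^2+\nu\|\nabla^4 B\|_{L^2}^2
\lesssim \|\nabla^2 B\|_{H^1}^2\|\nabla u\|_{H^2}^2+\|\nabla B\|_{H^1}^2\|\nabla^3 u\|_{L^2}^2 .
$$

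First I would bound the right-hand side for $t\ge T^*$. The slowest factors, $\|\nabla u\|_{L^2}^2$ and $\|\nabla B\|_{L^2}^2$, decay like $(1+t)^{-\frac{3}{q}+\frac{1}{2}}$ by \eqref{Decay-Pu-Guo}, while Lemma~\ref{lemma2.6} yields $\|\nabla^2 u\|_{H^1}^2+\|\nabla^3 u\|_{L^2}^2\lesssim(1+t)^{-\frac{3}{q}-\frac{1}{2}}$ and Lemma~\ref{lemma2.2} yields $\|\nabla^2 B\|_{H^1}^2\lesssim(1+t)^{-\frac{3}{q}-\frac{1}{2}}$. Pairing a slow factor with a fast one, both products are controlled by $(1+t)^{-\frac{3}{q}+\frac{1}{2}}(1+t)^{-\frac{3}{q}-\frac{1}{2}}=(1+t)^{-\frac{6}{q}}$, so the nonlinear source is $\lesssim(1+t)^{-\frac{6}{q}}$. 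Next I would insert the Fourier splitting bound \eqref{2213} for $\|\nabla^4 B\|_{L^2}^2$ and estimate the correction $\left(\frac{R}{1+t}\right)^2\|\nabla^2 B\|_{L^2}^2\lesssim(1+t)^{-2}(1+t)^{-\frac{3}{q}-\frac{1}{2}}=(1+t)^{-\frac{3}{q}-\frac{5}{2}}$ via Lemma~\ref{lemma2.2}, arriving at
$$
\frac{d}{dt}\|\nabla^3 B\|_{L^2}^2+\frac{\nu R}{1+t}\|\nabla^3 B\|_{L^2}^2\lesssim(1+t)^{-\frac{3}{q}-\frac{5}{2}} .
$$
Choosing $R$ so large that $\nu R>\frac{3}{q}+\frac{3}{2}$, multiplying by the integrating factor $(1+t)^{\nu R}$ and integrating over $[T^*,t]$ gains one temporal power from the source and gives $\|\nabla^3 B\|_{L^2}^2\lesssim(1+t)^{-\frac{3}{q}-\frac{3}{2}}=(1+t)^{-3\left(\frac1q-\frac12\right)-3}$, which is \eqref{271}.

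The only real obstacle is the bookkeeping of exponents: one must check that the nonlinear source $(1+t)^{-6/q}$ decays no slower than the splitting correction $(1+t)^{-3/q-5/2}$, i.e. $\frac{6}{q}\ge\frac{3}{q}+\frac{5}{2}$, which is precisely the relation \eqref{Large-Small-Relation} (equivalently $\frac{3}{q}\ge\frac{5}{2}$) already exploited in Lemma~\ref{lemma2.2} for $q\in[1,\frac{6}{5})$; and that the surviving term after integration matches the claimed rate, since integrating $(1+t)^{-3/q-5/2}$ against the factor $(1+t)^{\nu R}$ produces exactly $(1+t)^{-3/q-3/2}$ while the initial-data contribution $(1+t)^{-\nu R}$ is subdominant. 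Because the sharp inputs from Lemma~\ref{lemma2.6} hold only for $t\ge T^*$, the conclusion is likewise restricted to $t\ge T^*$.
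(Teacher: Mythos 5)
Your proposal is correct and takes essentially the same route as the paper's own proof: both start from the differential inequality \eqref{third-order}, bound its right-hand side by $(1+t)^{-\frac{6}{q}}$ using the sharpened rates of Lemmas \ref{lemma2.2} and \ref{lemma2.6} together with \eqref{Decay-Pu-Guo}, insert the Fourier splitting bound \eqref{2213} whose correction term decays like $(1+t)^{-\frac{3}{q}-\frac{5}{2}}$, and close with an integrating-factor argument after checking $\frac{6}{q}\ge\frac{3}{q}+\frac{5}{2}$. Your only deviations are harmless: you integrate over $[T^*,t]$ rather than $[0,t]$ (arguably more careful, since the inputs from Lemma \ref{lemma2.6} hold only for $t\ge T^*$), and you impose the general condition $\nu R>\frac{3}{q}+\frac{3}{2}$ instead of the paper's explicit choice $R=\frac{3+2q}{q\nu}$.
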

\begin{proof}
Combining the time decay rates \eqref{231}, \eqref{261} with \eqref{2311}, we get
\begin{equation}\label{272}
\begin{aligned}
&\frac{d}{dt}\int |\nabla^3 B|^2 dx+\nu\int |\nabla^4 B|^2 dx\\
&\lesssim \|\nabla^2 B\|_{H^1}^2\|\nabla u\|_{H^2}^2
           +\|\nabla B\|_{H^1}^2\|\nabla^3 u\|_{L^2}^2\\
&\lesssim (1+t)^{-3\left(\frac{1}{q}-\frac{1}{2}\right)-2}(1+t)^{-3\left(\frac{1}{q}-\frac{1}{2}\right)-1}\\
&\lesssim (1+t)^{-\frac{6}{q}}.
\end{aligned}
\end{equation}
Combining \eqref{2313}, \eqref{272} and the time decay rates \eqref{231}, we obtain
\begin{equation}\label{273}
\begin{aligned}
&\frac{d}{dt}\int |\nabla^3 B|^2 dx+\frac{R\nu}{1+t}\int |\nabla^3 B|^2 dx\\
&\lesssim \frac{R^2 \nu}{(1+t)^2}\int |\nabla^2 B|^2 dx+(1+t)^{-\frac{6}{q}}\\
&\lesssim (1+t)^{-\left(\frac{3}{q}+\frac{5}{2}\right)}+(1+t)^{-\frac{6}{q}}.
\end{aligned}
\end{equation}
By virtue of $q\in \left[1, \frac{6}{5}\right)$, it is easy to see that
$$
\frac{6}{q}-\left(\frac{3}{q}+\frac{5}{2}\right)
=\frac{5\left(\frac{6}{5}-q\right)}{2q} \ge 0.
$$
Then, we deduce from the inequality \eqref{273} that
\begin{equation}\label{274}
\frac{d}{dt}\int |\nabla^3 B|^2 dx+\frac{R\nu}{1+t}\int |\nabla^3 B|^2 dx
\lesssim (1+t)^{-\left(\frac{3}{q}+\frac{5}{2}\right)}.
\end{equation}
Choosing $R=\frac{3+2q}{q\nu}$ and multiplying \eqref{274} by $(1+t)^{\frac{3+2q}{q}}$,
then we have
\begin{equation}\label{275}
\frac{d}{dt}[(1+t)^{\frac{3+2q}{q}}\|\nabla^3 B\|_{L^2}^2] \lesssim (1+t)^{-\frac{1}{2}}.
\end{equation}
Integrating \eqref{275}  over $[0, t]$, it arrives at
$$
\|\nabla^3 B (t)\|_{L^2}^2
\le (1+t)^{-\left(\frac{3}{q}+2\right)}(\|\nabla^3 B_0\|_{L^2}^2+C(1+t)^{\frac{1}{2}}),
$$
which implies the following time decay rate
$$
\|\nabla^3 B (t)\|_{L^2}^2 \le C(1+t)^{-3\left(\frac{1}{q}+\frac{1}{2}\right)}.
$$
Therefore, we complete the proof of lemma.
\end{proof}

\emph{\bf{Proof of Theorem \ref{Theorem1.1}:}}
With the help of Lemma \ref{lemma2.2}, Lemma \ref{lemma2.6},
and Lemma \ref{lemma2.7}, we complete the proof of the Theorem \ref{Theorem1.1}.

\section{Proof of Theorem \ref{Theorem1.2}}\label{section3}

\quad In this section, we establish time decay rates for the mixed space-time derivatives of solutions.

\begin{lemm}\label{lemma3.1}
Under the assumptions in Theorem \ref{Theorem1.1}, the global solution $(\varrho, u, \sigma, B)$
of problem
\eqref{eq1}-\eqref{eq4} has the time decay rates
$$
\begin{aligned}
&\|\nabla^k \varrho_t (t)\|_{H^{2-k}}
+\|\nabla^k u_t(t)\|_{L^2}
+\|\nabla^k \sigma_t(t)\|_{L^2}
\le C(1+t)^{-\frac{3}{2}\left(\frac{1}{q}-\frac{1}{2}\right)-\frac{k+1}{2}},\\
&\|\nabla^k B_t(t)\|_{L^2} \le C(1+t)^{-\frac{3}{2}\left(\frac{1}{q}-\frac{1}{2}\right)-\frac{k+2}{2}},\\
\end{aligned}
$$
where $k=0,1$.
\end{lemm}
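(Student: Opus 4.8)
The plan is to read the four time derivatives directly off the system \eqref{eq1}, expressing each one algebraically as a linear combination of spatial derivatives of the solution plus the nonlinear sources $S_i$ of \eqref{eq2}. Explicitly,
\begin{equation*}
\varrho_t=-{\rm div}\,u+S_1,\qquad
u_t=\mu\Delta u+(\mu+\lambda)\nabla{\rm div}\,u-\nabla\varrho-\nabla\sigma+S_2,
\end{equation*}
together with $\sigma_t=\kappa\Delta\sigma-{\rm div}\,u+S_3$ and $B_t=\nu\Delta B+S_4$. Applying $\nabla^k$ and taking the norm prescribed in the statement then reduces the whole lemma to two tasks: controlling the linear right-hand sides by the spatial decay rates already established, and showing that the nonlinear remainders $\nabla^k S_i$ decay at least as fast.

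For the linear parts I would simply invoke Theorem \ref{Theorem1.1} together with the rates \eqref{Decay-Pu-Guo}. The point is that in each norm the slowest-decaying linear term reproduces exactly the claimed exponent. For $\|\nabla^k\varrho_t\|_{H^{2-k}}$ the dominant piece is $\|\nabla^{k+1}u\|_{L^2}\lesssim(1+t)^{-\frac{3}{2}(\frac{1}{q}-\frac{1}{2})-\frac{k+1}{2}}$; for $\|\nabla^k u_t\|_{L^2}$ it is $\|\nabla^{k+1}(\varrho,\sigma)\|_{L^2}$ (the contribution $\|\nabla^{k+2}u\|_{L^2}$ decaying strictly faster), again at rate $-\frac{3}{2}(\frac{1}{q}-\frac{1}{2})-\frac{k+1}{2}$; for $\|\nabla^k\sigma_t\|_{L^2}$ it is $\|\nabla^{k+1}u\|_{L^2}$; and for $\|\nabla^k B_t\|_{L^2}$ it is $\|\nabla^{k+2}B\|_{L^2}\lesssim(1+t)^{-\frac{3}{2}(\frac{1}{q}-\frac{1}{2})-\frac{k+2}{2}}$, which accounts for the extra half-power of decay enjoyed by the magnetic field. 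The required second- and third-order bounds for $B$ come from Lemma \ref{lemma2.2} and Lemma \ref{lemma2.7}, while those for $(\varrho,u,\sigma)$ come from Lemma \ref{lemma2.6}.

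The only genuine work lies in the nonlinear terms, and the key observation is that every component of $S_i$ in \eqref{eq2} is at least quadratic in $(\varrho,u,\sigma,B)$ and their derivatives, so it carries two decaying factors and is therefore dominated by the linear leading term. I would estimate each product by H\"older's inequality, placing one factor in $L^\infty$ or $L^6$---controlled by the uniform bound \eqref{smallness-condition1} or by the Gagliardo--Nirenberg inequality of Lemma \ref{lemma2.1}---and assigning the spatial decay rates of Theorem \ref{Theorem1.1} to the remaining factor(s). A representative term such as $u\cdot\nabla\varrho$ in $S_1$, or $B\cdot\nabla B$ and $\nu|{\rm curl}\,B|^2$ appearing in $S_2,S_3$, then produces a combined exponent strictly larger than the target, the restriction $q\in[1,\frac{6}{5})$ guaranteeing the margin is nonnegative exactly as in \eqref{Large-Small-Relation}. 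The main obstacle is thus purely bookkeeping: matching each nonlinear factor to the correct Sobolev norm so that the available decay rates apply, which is routine once the linear skeleton above is in place. Collecting the linear and nonlinear bounds for $k=0,1$ yields \eqref{Decay3}.
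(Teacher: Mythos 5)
Your proposal is correct and follows essentially the same route as the paper: the paper's proof of Lemma \ref{lemma3.1} likewise solves \eqref{eq1} for $\varrho_t$, $u_t$, $\sigma_t$, $B_t$, bounds the linear terms by the decay rates of Theorem \ref{Theorem1.1} (Lemmas \ref{lemma2.2}, \ref{lemma2.6}, \ref{lemma2.7}) together with \eqref{Decay-Pu-Guo}, and dominates the quadratic terms $\nabla^k S_i$ via H\"older, Sobolev and \eqref{2.6}--\eqref{2.7} exactly as you outline. The only nuance worth noting is that some linear contributions (e.g. $\|\nabla^{3}u\|_{L^2}$ in $\|\nabla u_t\|_{L^2}$) decay at exactly, not strictly faster than, the target rate, which is all that is needed.
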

\begin{proof}
First of all, applying \eqref{eq1}$_1$, \eqref{Decay1}, Holder and Sobolev inequalities, it arrives at
\begin{equation}\label{312}
\begin{aligned}
\|\varrho_t\|_{L^2}^2
&=\|-{\rm div}u-\varrho {\rm div}u-u \cdot \nabla \varrho\|_{L^2}^2\\
&\lesssim \|\nabla u\|_{L^2}^2+\|\varrho\|_{L^6}^2\|\nabla u\|_{L^3}^2
          +\|u\|_{L^6}^2\|\nabla \varrho\|_{L^3}^2\\
&\lesssim (1+t)^{-\frac{3}{q}+\frac{1}{2}}
          +(1+t)^{-\frac{3}{q}+\frac{1}{2}}(1+t)^{-\frac{3}{q}+\frac{1}{2}}\\
&\lesssim (1+t)^{-\frac{3}{q}+\frac{1}{2}}.
\end{aligned}
\end{equation}
Similarly, we obtain
\begin{equation}\label{313}
\begin{aligned}
\|\nabla \varrho_t\|_{L^2}^2
&=\|-\nabla{\rm div}u- \nabla(\varrho {\rm div}u-u \cdot \nabla \varrho)\|_{L^2}^2\\
&\lesssim \|\nabla^2 u\|_{L^2}^2+\|\nabla \varrho\|_{L^3}^2\|\nabla u\|_{L^6}^2
          +\|\varrho\|_{L^3}^2\|\nabla^2 u\|_{L^6}^2\\
&\quad \  +\|\nabla u\|_{L^6}^2\|\nabla \varrho\|_{L^3}^2
          +\|u\|_{L^6}^2\|\nabla^2 \varrho\|_{L^3}^2\\
&\lesssim \| \nabla^2 \varrho\|_{H^1}^2+\| \nabla^2 u\|_{H^1}^2\\
&\lesssim (1+t)^{-\frac{3}{q}-\frac{1}{2}},
\end{aligned}
\end{equation}
and
\begin{equation}\label{314}
\begin{aligned}
\|\nabla^2 \varrho_t\|_{L^2}^2
&=\|\nabla^2(-{\rm div}u-\varrho{\rm div}u-u \cdot \nabla \varrho)\|_{L^2}^2\\
&\lesssim \|\nabla^3 u\|_{L^2}^2+\|\nabla^2 \varrho\|_{L^3}^2\|\nabla u\|_{L^6}^2
          +\|\varrho\|_{L^\infty}^2\|\nabla^3 u\|_{L^2}^2\\
&\quad \  +\|\nabla \varrho\|_{L^6}^2\|\nabla^2 u\|_{L^3}^2
          +\|u\|_{L^\infty}^2\|\nabla^3 \varrho\|_{L^2}^2\\
&\lesssim \|\nabla^3 u\|_{L^2}^2+\|\nabla^2 \varrho\|_{H^1}^2\|\nabla^2 u\|_{H^1}^2
          +\|\nabla u\|_{H^1}^2 \|\nabla^3 \varrho\|_{L^2}^2\\
&\lesssim \|\nabla^3 u\|_{L^2}^2+\|\nabla^2 \varrho\|_{H^1}^2\\
&\lesssim (1+t)^{-\frac{3}{q}-\frac{1}{2}}.
\end{aligned}
\end{equation}
Combining \eqref{312}-\eqref{314}, then we have the time decay rates
\begin{equation}\label{315}
\|\nabla^k \varrho_t(t)\|_{H^{2-k}}^2 \le C(1+t)^{-3\left(\frac{1}{q}-\frac{1}{2}\right)-(k+1)},
\end{equation}
where $k=0, 1$.
Secondly, in view of the equation $\eqref{eq1}_2$, \eqref{Decay1}, \eqref{2.6},
\eqref{2.7}, Holder and Sobolev inequalities, it is easy to deduce
\begin{equation}\label{316}
\begin{aligned}
\|u_t\|_{L^2}^2
&=\|\mu \Delta u+(\mu+\lambda)\nabla {\rm div} u-\nabla \varrho-\nabla \sigma\|_{L^2}^2\\
&\quad +\|-u\cdot \nabla u-h(\varrho)[\mu \Delta u+(\mu+\lambda)\nabla {\rm div} u]
          -E(\varrho, \sigma)\nabla \varrho\|_{L^2}^2\\
&\quad +\|-F(\varrho, \sigma)\nabla \sigma
          +g(\varrho)[B\cdot \nabla B-\frac{1}{2}\nabla (|B|^2)]\|_{L^2}^2\\
&\lesssim \|\nabla^2 u\|_{L^2}^2+\|\nabla(\varrho, \sigma)\|_{L^2}^2
          +\|u\|_{L^6}^2\|\nabla u\|_{L^3}^2+\|h(\varrho)\|_{L^\infty}^2\|\nabla^2 u\|_{L^2}^2\\
&\quad \ +\|(E(\varrho, \sigma), F(\varrho, \sigma))\|_{L^\infty}^2\|\nabla(\varrho, \sigma)\|_{L^2}^2
         +\|g(\varrho)\|_{L^\infty}^2\|B\|_{L^6}^2\|\nabla B\|_{L^3}^2\\
&\lesssim \|\nabla (u, B)\|_{H^1}^2+\|\nabla(\varrho, \sigma)\|_{L^2}^2\\
&\lesssim (1+t)^{-\frac{3}{q}+\frac{1}{2}}.
\end{aligned}
\end{equation}
Similarly, it is easy to deduce
\begin{equation}\label{317}
\begin{aligned}
\|\nabla u_t\|_{L^2}^2
&=\|\nabla(\mu \Delta u+(\mu+\lambda)\nabla {\rm div} u-\nabla \varrho-\nabla \sigma)\|_{L^2}^2
    +\|\nabla S_2\|_{L^2}^2\\
&\lesssim \|\nabla^3 u\|_{L^2}^2+\|\nabla^2 (\varrho, \sigma)\|_{L^2}^2
           +\|\nabla u\|_{H^1}^2\|\nabla^2 u\|_{H^1}^2\\
&\quad     +\|\nabla(\varrho, \sigma)\|_{H^1}^2\|\nabla^2 (\varrho,\sigma) \|_{L^2}^2
           +\|\nabla B\|_{H^1}^2\|\nabla^2 B\|_{H^1}^2\\
&\lesssim \|\nabla^2 (u, B)\|_{H^1}^2+\|\nabla^2 (\varrho, \sigma)\|_{L^2}^2\\
&\lesssim (1+t)^{-\frac{3}{q}-\frac{1}{2}}.
\end{aligned}
\end{equation}
Combining \eqref{316}-\eqref{317}, then we have the time decay rates
\begin{equation}\label{318}
\|\nabla^k u_t(t)\|_{L^{2}}^2 \le C(1+t)^{-3\left(\frac{1}{q}-\frac{1}{2}\right)-(k+1)},
\end{equation}
where $k=0, 1$.
Furthermore, it is easy to deduce
\begin{equation}\label{319}
\begin{aligned}
\|\sigma_t\|_{L^2}^2
&=\|\kappa \Delta \sigma-{\rm div}u-u\cdot \nabla \sigma
    - h(\varrho)(\kappa \Delta \sigma)-G(\varrho, \sigma){\rm div}u\|_{L^2}^2\\
&\quad +\|g(\varrho)\left[2\mu|D(u)|^2
       +\lambda({\rm div}u)^2\right]+g(\varrho)(\nu |{\rm curl} B|^2)\|_{L^2}^2\\
&\lesssim \|\nabla^2 \sigma\|_{L^2}^2+\|\nabla u\|_{L^2}^2
          +\|u\|_{L^6}^2\|\nabla \sigma\|_{L^3}^2
          +\|h(\varrho)\|_{L^\infty}^2\|\nabla^2 \sigma\|_{L^2}^2\\
&\quad +\|G(\varrho, \sigma)\|_{L^\infty}^2\|\nabla u\|_{L^2}^2
       +\|g(\varrho)\|_{L^\infty}^2\|\nabla u\|_{L^3}^2\|\nabla u\|_{L^6}^2\\
&\quad +\|g(\varrho)\|_{L^\infty}^2\|\nabla B\|_{L^3}^2\|\nabla B\|_{L^6}^2\\
&\lesssim \|\nabla^2 \sigma\|_{L^2}^2+\|\nabla (u, B)\|_{H^1}^2\\
&\lesssim (1+t)^{-\frac{3}{q}+\frac{1}{2}}.
\end{aligned}
\end{equation}
Similarly, it is easy to deduce
\begin{equation}\label{3110}
\begin{aligned}
\|\nabla \sigma_t\|_{L^2}^2
&=\|\nabla(\kappa \Delta \sigma-{\rm div}u)\|_{L^2}^2+\|\nabla S_3\|_{L^2}^2\\
&\lesssim \|\nabla^3 \sigma\|_{L^2}^2+\|\nabla^2 u\|_{L^2}^2
           +\|\nabla u\|_{H^1}^2\|\nabla^2 \sigma\|_{H^1}^2\\
&\quad     +\|\nabla(\varrho, \sigma)\|_{H^1}^2\|\nabla^2 u\|_{L^2}^2
           +\|\nabla (\varrho, u)\|_{H^1}^2\|\nabla^2 u\|_{H^1}^2\\
&\quad     +\|\nabla (\varrho, B)\|_{H^1}^2\|\nabla^2 B\|_{H^1}^2\\
&\lesssim \|\nabla^2 (\sigma, u, B)\|_{H^1}^2\\
&\lesssim (1+t)^{-\frac{3}{q}-\frac{1}{2}}.
\end{aligned}
\end{equation}
In view of \eqref{319}-\eqref{3110}, then we have the time decay rates
\begin{equation}\label{3112}
\|\nabla^k \sigma_t(t)\|_{L^{2}}^2 \le C(1+t)^{-3\left(\frac{1}{q}-\frac{1}{2}\right)-(k+1)},
\end{equation}
where $k=0, 1$.
Finally, it follows from the $\eqref{eq1}_3$, Holder and Sobolev inequalities that
\begin{equation}\label{3113}
\begin{aligned}
\|B_t\|_{L^2}^2
&=\|\nu \Delta B-u \cdot \nabla B+B\cdot \nabla u-B {\rm div}u\|_{L^2}^2\\
&\lesssim \|\nabla^2 B\|_{L^2}^2+\|u\|_{L^6}^2\|\nabla B\|_{L^3}^2
          +\|B\|_{L^6}^2\|\nabla u\|_{L^3}^2\\
&\lesssim \|\nabla^2 B\|_{L^2}^2
          +\|\nabla(u, B)\|_{L^2}^2\|\nabla (u, B)\|_{H^1}^2\\
&\lesssim (1+t)^{-\frac{3}{q}-\frac{1}{2}}+(1+t)^{-\frac{3}{q}+\frac{1}{2}}(1+t)^{-\frac{3}{q}+\frac{1}{2}}\\
&\lesssim (1+t)^{-\frac{3}{q}-\frac{1}{2}}.
\end{aligned}
\end{equation}
Similarly, it follows from $\eqref{eq1}_3$, \eqref{233}-\eqref{235} that
\begin{equation}\label{3114}
\begin{aligned}
\|\nabla B_t\|_{L^2}^2
&=\|\nabla (\nu \Delta B-u \cdot \nabla B+B\cdot \nabla u-B {\rm div}u)\|_{L^2}^2\\
&\lesssim \|\nabla^3 B\|_{L^2}^2+\|\nabla u\|_{H^2}^2\|\nabla B\|_{H^2}^2\\
&\lesssim (1+t)^{-\frac{3}{q}-\frac{3}{2}}+(1+t)^{-\frac{6}{p}+1}\\
&\lesssim (1+t)^{-\frac{3}{q}-\frac{3}{2}}.
\end{aligned}
\end{equation}
By view of \eqref{3113} and \eqref{3114}, it is easy to obtain
\begin{equation}\label{3115}
\|\nabla^k B_t(t)\|_{L^2}^2 \le C(1+t)^{-3\left(\frac{1}{q}-\frac{1}{2}\right)-(k+2)},
\end{equation}
where $k=0,1$.
Therefore, we complete the proof of the lemma.
\end{proof}

\emph{\bf{Proof of Theorem \ref{Theorem1.2}:}} With the help of Lemma \ref{lemma3.1},
we complete the proof of the Theorem \ref{Theorem1.2}.

\section*{Acknowledgements}

Zheng-an Yao's research is supported in part by NNSFC(Grant No.11271381)
and China 973 Program(Grant No. 2011CB808002).
Qiang Tao's research is supported by the NSF of China under grant 11171060 and 11301345,
by Guangdong Natural Science Foundation under grant 2014A030310074.

\phantomsection
\addcontentsline{toc}{section}{\refname}

\end{document}